\newcommand{\bydef}{\,\stackrel{\mbox{\tiny\textnormal{\raisebox{0ex}[0ex][0ex]{def}}}}{=}\,}
\newcommand{\C}{\mathbb C}
\begin{document}

\title{From the Lagrange polygon to the figure eight I
\thanks{
RC was partially supported by UNAM-PAPIIT project IN101020.
CGA was partially supported by UNAM-PAPIIT grant IA100121.
JPL was partially supported by an NSERC Discovery Grant.
JDMJ was partially supported by NSF grant 
DMS-1813501.
}
}
%

\subtitle{Numerical evidence extending a conjecture of Marchal}

\author{Renato Calleja \and
Carlos Garc\'{\i}a-Azpeitia \and
Jean-Philippe Lessard \and
J.D. Mireles James 
}


\institute{R. Calleja
               \at Universidad Nacional Aut\'onoma de M\'exico, 
               IIMAS \\
               \email{calleja@mym.iimas.unam.mx}
\and
              C. Garc\'{\i}a-Azpeitia
              \at Universidad Nacional Aut\'onoma de M\'exico, 
               IIMAS \\
               \email{cgazpe@ciencias.unam.mx}
\and
              J.P. Lessard
              \at McGill University, 
              Department of Mathematics and Statistics \\
              \email{jp.lessard@mcgill.ca}
\and
              J.D. Mireles James \at
              Florida Atlantic University, 
              Department of Mathematical Sciences \\
              \email{jmirelesjames@fau.edu}
}

\date{}

\maketitle

\begin{abstract}
The present work studies the continuation class of the regular 
$n$-gon solution of  the $n$-body problem. For odd numbers of bodies between 
$n = 3$ and $n = 15$ we apply one parameter 
numerical continuation algorithms to the energy/frequency variable, 
and find that  the figure eight choreography 
can be reached starting from the regular $n$-gon.  The continuation leaves the 
plane of the $n$-gon, and passes through
families of spatial choreographies with the topology of torus knots.
Numerical continuation out of the $n$-gon solution is complicated by the fact 
that the kernel of the linearization there is high dimensional. 
Our work exploits a symmetrized version of the problem 
which admits dense sets of choreography solutions, and which can be written 
as  a delay differential equation in terms of one of the bodies.  
This symmetrized setup simplifies the problem in several ways. 
On one hand, the direction of the kernel is determined automatically 
by the symmetry.  On the other hand, the set of possible bifurcations
is reduced and the $n$-gon 
continues to the eight after a single symmetry breaking bifurcation.
Based on the calculations presented here 
we conjecture that the $n$-gon and the eight are in the same continuation 
class for all odd numbers of bodies.
\keywords{$n$-body problem
\and choreographies
\and numerical continuation
\and delay differential equations}
\PACS{45.50.Jf	 \and 45.50.Pk \and 45.10.-b \and 02.60.Lj \and 05.45.Ac}
\subclass{70K44 \and 34C45 \and 70F15}
\end{abstract}



%
%
\section{Introduction} 
\label{sec:intro}
The qualitative theory of nonlinear dynamics has deep roots 
in the pioneering work of
Poincar\'{e} \cite{MR926906,MR926907,MR926908}, where
invariant sets  -- and periodic orbits in particular -- play a 
central organizational role.  Inspired by the work of Poincar\'{e},
 a number of late Nineteenth and early Twentieth Century 
astronomers like Darwin, Moulton, and Str\"{o}mgren conducted thorough numerical 
studies of periodic motions in gravitational $n$--body problems long before the 
advent of digital computing \cite{MR1554890,MR0094486,Stromgren}. Over the last 
century scientific interest in $n$-body dynamics  has only increased, driving developments in
diverse fields from computational mathematics to algebraic topology. By now
the literature is rich enough to discourage even a terse survey. We refer
to the Lecture notes of Chenciner \cite{MR3329413}, as well as the books of
Moser, Meyer and Hall, and Szebehely 
\cite{MR1829194,MR2468466,theoryOfOrbits} where the interested reader will
find both modern overviews of the theory and thorough reviews of the literature.

The present work is concerned with a special class of periodic
orbits known as \textit{choreographies}, where $n$ gravitating bodies 
follow one another around the same 
closed curve in $\mathbb{R}^3$.  
The most basic example of a choreography comes from the  
classical equilateral triangle 
configuration of Lagrange, where three massive bodies
are located at the vertices of a rigid equilateral
triangle revolving around the center of mass.  
If the bodies all have the same mass
then each goes around the same circle with constant 
angular velocity.  This is an example of a circular choreography.  See the left frame 
of Figure \ref{fig:Polygons}.

Lagrange published this special solution of the three body problem in 1772 
\cite{lagrangeTriangle}.  The result was generalized by Hoppe in 1879
\cite{nGonPaper}, giving the existence of a circular choreography
for any number of bodies.  In this case the bodies are arranged at the vertices 
of a rotating regular $n$-gon, and the choreography is the inscribing 
circle.  In 1985 Perko and Walter \cite{PerkoW85} showed that when $n \geq 4$, 
in contrast to the 3 body case, the $n$-gon solution exists if and only if the masses 
of the $n$ bodies are equal.  The right frame of 
Figure  \ref{fig:Polygons} illustrates a circular $n$-gon choreography solution 
for the case of $n = 15$ bodies.

The first non-circular choreography solution was discovered numerically by
C.~Moore in the early 1990's \cite{Mo93}. This solution consists of three
bodies of equal mass following one another around the now famous figure-eight orbit.
See the left frame of Figure~\ref{fig:Eights}. Chenciner and Montgomery, in \cite{ChMo00}
gave a rigorous mathematical proof of the existence of the figure-eight
choreography by minimizing the Newtonian action functional over paths connecting
collinear and isosceles configurations of the three bodies. Many additional
numerical results for the eight are described by Sim\'{o} in \cite{MR1884902},
who also coined the term {\em choreography}.
Several animations of $n$-body choreographies are found at the webpage
\cite{renatoAnimations}.

\begin{figure}[t!]
\centering
\vspace{\baselineskip}
\includegraphics[width=.7 \textwidth]{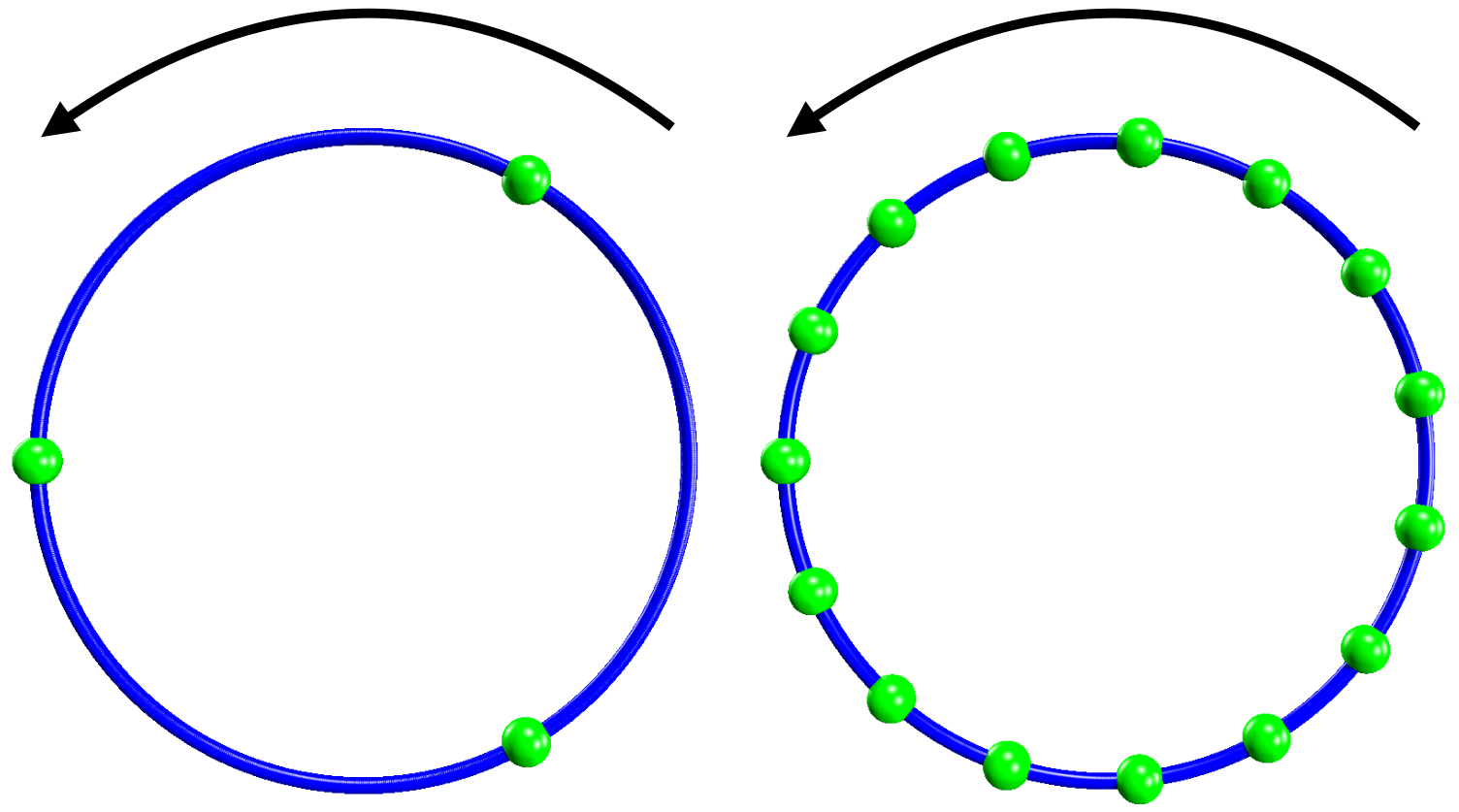}
\vspace{-.3cm}
\caption{\textbf{Circular Choreographies:} 
In this figure green spheres represent masses and the blue 
curve illustrates the circular choreography followed by the 
bodies.  The left frame illustrates a ``snap shot''
of the equilateral triangle solution of Lagrange.   At each instant the three bodies 
are located at the vertices of an equilateral triangle, which rigidly rotates 
with constant angular velocity.  When the masses are equal the triangle revolves 
about the center of mass of the three bodies.  
The right frame illustrates a 15 body circular choreography, where at each moment
the bodies are located at the vertices of a regular $15$-gon.  
The snapshot rotates the center of mass with constant angular velocity.  
Note that if we change to co-rotating coordinates (rotating coordinate
frame origin at the center of mass and angular velocity matching the angular velocity 
of the triangle/polygon) then the triangle/polygon represents an equilibrium configuration 
in rotating coordinates.  
 }
\label{fig:Polygons}
\end{figure}

\begin{figure}[t!]
\centering
\vspace{\baselineskip}
\includegraphics[width=1 \textwidth]{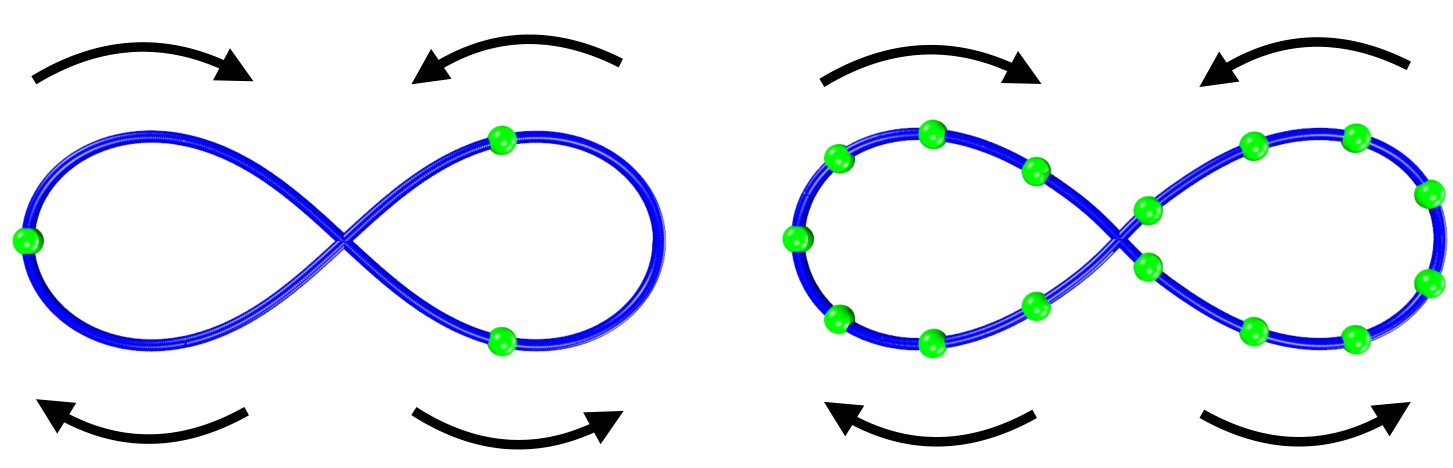}
\vspace{-.3cm}
\caption{\textbf{Figure Eight Choreographies:} just as in 
Figure \ref{fig:Polygons}, green spheres represent the massive bodies and the blue 
curve illustrates the path of the choreography orbit.  The left frame illustrates a ``snap shot''
of the three body eight, and the arrows indicate the motion along the various
segments of the curve.  The right frame is similar, illustrating a 15 body eight.}
\label{fig:Eights}
\end{figure}

It is a fundamental geometric property of conservative systems that periodic orbits occur in 
one parameter families, or tubes, smoothly parameterized by energy/frequency.  
We say that two periodic orbits are in the same continuation class if 
one can be reached from the other by continuous variation of the energy.
Note that, because of bifurcations, the global geometry of a continuation
class is not a single tube but rather a ``tree'', possibly with many branches.   

The literature discussed in the preceding paragraphs makes it
clear that the three body problem admits at least two 
distinct choreography solutions: the equilateral triangle and the eight.
Moreover, the symmetry group of the eight choreography is a $12^{th}$-order
subgroup of the symmetry group of the equilateral triangle.
Then a natural question is:  are these co-existing choreographies 
are related by continuation?
Indeed, one finds in the $2010$ M\'{e}moire D'Habilitation of Jacques F\'{e}joz
\cite{fijozHabilatation} the following recollection regarding Christian Marchal: 
that ``in 1999, when he (Marchal) heard about the choreographic figure-eight 
solution of Chenciner-Montgomery, he at once imagined that the eight could 
be the unknown end of $P12$.''  Here $P12$ is an out of plane family of periodic 
orbits related to the equilateral triangle of Lagrange and  discussed in somewhat 
more detail below.  
These remarks are formalized as follows.

\begin{conjecture} 
[Marchal's Conjecture]\label{theConjecture} 
The three body equilateral triangle of Lagrange and the 
three body figure eight are in the same continuation class.
\end{conjecture}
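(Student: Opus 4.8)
The plan is to upgrade the numerical continuation reported in this paper to a computer-assisted proof, carried out entirely in the symmetry-reduced formulation so that the linearization at the Lagrange triangle has a one-dimensional kernel and the set of admissible bifurcations is constrained by the residual symmetry. First I would recast the symmetric choreographies of the three-body problem as zeros of a nonlinear operator $F(a,\nu)=0$ on a Banach space of sufficiently rapidly decaying Fourier coefficients $a$, where $\nu$ is the frequency and $F$ encodes the delay differential equation for a single body together with the center-of-mass and phase-normalization constraints. At one end, $F$ has the explicit Lagrange-triangle solution; at the other end it has the figure-eight solution, whose approximate Fourier data and non-degeneracy I would take from a separate validated computation. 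A continuous solution arc of $F=0$ joining the two then certifies that they lie on a single connected component, i.e.\ in the same continuation class.

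Second, along the planar vertical family $P12$ issuing from the triangle I would compute the monodromy spectrum with validated bounds and locate the first parameter value $\nu_\ast$ at which a Floquet multiplier crosses the unit circle in the isotypic component corresponding to out-of-plane perturbations; a symmetry-adapted Lyapunov--Schmidt reduction, with the reduced bifurcation equation enclosed rigorously, would show this is a non-degenerate symmetry-breaking pitchfork and would produce the emanating branch of genuinely spatial choreographies carrying torus-knot topology.

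Third, I would enclose that spatial branch rigorously: cover it by finitely many overlapping pieces, on each of which a Newton--Kantorovich / radii-polynomial argument (interval arithmetic for the derivative and tail estimates, with the delay term handled through its exact diagonal action on Fourier modes) certifies a unique smooth arc of solutions inside an explicit tube, and continue past any fold points of $\nu$ detected by a validated sign change of the relevant determinant. Consistent with the numerics, one expects only the single symmetry-breaking bifurcation along the way, so the global object is essentially one folded arc; the argument closes once the last certified piece is shown to contain the figure-eight solution and the successive tubes are shown to overlap with no gap.

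The main obstacle is this third step together with the global bookkeeping. The branch is long, leaves the plane of the triangle, threads through a sequence of spatial torus-knot families, and almost certainly folds in $\nu$ several times before reaching the eight; every such transition must be captured with validated numerics, the tubes must be proven to overlap everywhere, and all a posteriori estimates must absorb both the $1/r^{2}$-type Newtonian nonlinearity and the non-local delay term, which together make the constants delicate. A secondary difficulty is that ``same continuation class'' refers to the full energy/frequency parameterization, so one must also verify enough regularity and monotonicity of $\nu$ (or of the energy) along each arc that the abstract solution pieces genuinely glue into the branching tree described in the introduction, rather than merely intersecting as point sets.
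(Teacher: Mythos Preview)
The statement you are attempting to prove is a \emph{conjecture}, not a theorem, and the paper does not prove it. The authors are explicit about this: ``to the best of our knowledge, the conjecture remains unproved in a completely mathematically rigorous sense.'' The paper's contribution is numerical evidence (via pseudo-arclength continuation of the symmetry-reduced DDE) supporting the conjecture and its generalization to odd $n$, together with a statement in the conclusion that a computer-assisted proof is work in preparation. So there is no ``paper's own proof'' to compare against; your proposal is a research plan for the very proof the authors say does not yet exist.

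As a plan, your outline is broadly aligned with the program the authors sketch in Section~\ref{sec:conclusions}, but it contains a structural confusion that would derail the argument. You refer to ``the planar vertical family $P12$'' and then propose to locate the bifurcation where ``out-of-plane perturbations'' first destabilize it. This is backwards: $P12$ is already the spatial (vertical Lyapunov) family that leaves the plane of the triangle at the equilibrium itself; the secondary bifurcation one must validate is not a planar-to-spatial transition but a symmetry-breaking (``axial'') bifurcation \emph{within} the spatial family, from $P12$ to the axial branch on which the eight sits. The relevant kernel direction is handed to you by the reduced DDE and the results of \cite{GaIz11,GaIz13} (see Proposition~\ref{prop:kernel_at_polygon}), not by a Floquet analysis of a planar family. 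Moreover, the paper indicates that the figure eight is planar and appears at the specific target frequency $\omega=2\sqrt{s_1}$ on the $k=2$ branch (Remark~\ref{Rem}), so your description of the branch as ``threading through a sequence of spatial torus-knot families'' before reaching the eight overstates the complexity in the symmetrized setting: numerically there is a \emph{single} bifurcation between the triangle and the eight, which is precisely why the DDE reduction is advantageous. Until these points are corrected and the validated continuation and bifurcation analysis are actually carried out with certified error bounds, what you have is a proposal, not a proof.
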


\noindent The conjecture  appeared also in the 2005 paper
of Chenciner, F\'{e}joz, and Montgomery.  See also the lecture notes of 
Chenciner \cite{MR2446248}.  Careful numerical calculations
supporting Marchal's conjecture are found in the $2008$ work of 
Wulff and Schebesch \cite{MR2429679}
on numerical continuation of relative periodic orbits in Hamiltonian systems.

We remark that Conjecture \ref{theConjecture} should be 
regarded as something much more than a mathematical curiosity. In the 
precise sense of global bifurcation theory, the conjecture concerns the 
question \textit{where does the figure eight choreography come from?}
We hasten to add that, to the best of our knowledge,
the conjecture remains unproved in a completely mathematically rigorous sense. 
Indeed, as is discussed further in \cite{fijozHabilatation}, it appears to be very
difficult to obtain the estimates necessary for a variational proof of
Conjecture \ref{theConjecture}.


We now describe in somewhat more detail 
the $P12$ vertical family of periodic orbits already alluded to above.
The three body equilateral triangle configuration of Lagrange corresponds to
an equilibrium solution of the rotating three body problem, 
and the existence of an attached family of
vertical Lyapunov periodic orbits is established
by Chenciner and F\'{e}joz in \cite{ChFe07}. 
The authors of \cite{ChFe07} compute a normal 
form at the Lagrange relative equilibrium and deduce that there
is a unique  (up to symmetries) bifurcating spatial family: the so
called $P12$ family. The family is important because, as long as it varies
continuously with respect to the frequency parameter, it provides 
--upon returning to the inertial frame--
a dense set of choreographic solutions to the three body problem. 
Continuity with respect to frequency was further established 
in \cite{ChFe08} for frequencies close to the Lagrange triangle.


Using an approach based on equivariant bifurcation theory, Garc\'{i}a-Azpeitia and Ize in 
\cite{GaIz11,GaIz13} studied in rotating coordinates the global existence
of the vertical Lyapunov family arising from the Lagrange triangle. The term
\textit{global} means here that the family forms a continuum (``tube'') in an appropriate
Sobolev space of normalized $2\pi$ periodic solutions. The continuum is
parameterized by frequency and terminates in one of the following
alternatives: the Sobolev norm of the orbits in the family tends to infinity,
the period of the orbits tends to infinity, the family ends in an orbit with
collision, or the family returns to another equilibrium solution.
Without additional information it is not possible
to know which alternative actually occurs, but this vertical 
Lyapunov family makes a good candidate for exploring the continuation
from the equilateral triangle to the eight because of the fact that it 
gives rise to choreography solutions.

What is more, the geometric picture just described extends naturally to any odd number of 
bodies.  The regular $n$-gon solution of Hoppe 
\cite{nGonPaper} (mentioned above) provides a circular choreography, and hence 
a relative equilibrium solution in rotating coordinates, for any number of bodies. 
The $n$-gon equilibrium in the rotating $n$-body problem
has always an attached vertical family of Lyapunov periodic 
orbits  \cite{GaIz11,GaIz13}.  For any even number of bodies the existence of a figure eight 
choreography can be ruled out via symmetry considerations 
(it would result in a finite time collision).  Yet for an odd numbers of bodies 
it is physically possible to have a figure eight, and indeed one finds numerical
evidence supporting the existence of $5$ and $19$ body eights in the classic 
work of Sim\'{o} \cite{Si00}.  See also Ferrario and Terracini \cite{FeTe04},
and note that the right frame of Figure \ref{fig:Eights} illustrates
a numerically computed 15-body eight.   Then one can ask: 
\textit{for which odd numbers $n$ are the $n$-gon and $n$-body eight in 
the same continuation family}?

The study by Calleja, Doedel, and Garc\'{i}a-Azpeitia 
\cite{CaDoGa18} casts additional light on the question.
In that reference the authors explore the behavior of 
the vertical family for different numbers of
bodies using numerical continuation methods. 
Of particular importance to the present study, 
the authors of  \cite{CaDoGa18} discovered a  
numerical continuation leading from the $7$-gon to the $7$-body eight. 
The continuation passes through the vertical 
Lyapunov (or $P12$) family, but also involves a symmetry breaking bifurcation
from this family.
The occurrence of bifurcations helps to explain the difficulty in applying
variational methods.

Combining the three body numerical continuations 
from \cite{MR2429679}
with the five body numerical continuations from \cite{CaDoGa18}
an interesting picture begins to emerge.  These studies suggest 
the possibility that, for odd 
numbers of bodies, continuation from the $n$-gon to the eight
may be the rule rather than the exception. 
This question motivates the present work.

We exploit the functional analytic framework for studying $n$-body choreographies
put forward by the authors of the present study in the recent work \cite{ourTorusKnots}. 
Our approach explicitly incorporates the symmetries and reduces the $n$-body 
choreography problem to a system of six scalar delay differential equations (DDE) describing 
the location of one of the bodies.  The idea is that
$n$-bodies on a choreography swap locations with one another after a fixed
fraction of the period, so that the gravitational force exerted by body $j$ on
body $k$ can be rewritten in terms of a force exerted on body $k$ \textit{by
itself} after an appropriate time shift -- hence the delay. Under explicit number theoretic
conditions on the frequency, periodic solutions of the rotating DDE provide
choreography orbits of the Newtonian $n$-body problem back in the inertial
reference frame.

Another notable component of the present work is that we incorporate the
theoretical insights \cite{GaIz11,GaIz13} into our numerical continuation framework.
More precisely, we exploit the first order description of the vertical
Lyapunov family given in the reference just cited to ``find our way out'' of the
high dimensional kernel caused by the high order resonances at the triangle.
In fact the results of \cite{GaIz11,GaIz13} apply to any number of bodies at
relative equilibrium on the $n$-gon. This leads to a general numerical
procedure for starting the continuation of the vertical Lyapunov family 
for any number of bodies,
and allows us to explore the continuation branch in
an automatic fashion.  The numerical explorations to be presented in the 
remainder of the present work suggest the following conjecture.

\begin{conjecture}
[Generalized Marchal's Conjecture]\label{ourConjecture} 
For any odd number of bodies, the $n$-gon choreography and the 
$n$-body figure eight are in the same continuation class.
\end{conjecture}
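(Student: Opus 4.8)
The plan is to upgrade the numerical continuation reported here into a computer-assisted proof, carried out one odd $n$ at a time, and then to try to extract from the resulting finite collection of proofs a mechanism that does not depend on $n$. First I would fix the symmetrized delay-equation formulation of \cite{ourTorusKnots}, so that the $n$-body choreography problem becomes a zero-finding problem $F(\branch,\omega)=0$ for an analytic map on a Banach space of (Fourier) coefficient sequences, with $\omega$ the frequency parameter and with the symmetry already quotiented out. In this reduced setting the linearization at the $n$-gon has, by \cite{GaIz11,GaIz13}, a kernel consisting essentially of the single direction that initiates the vertical Lyapunov family. Using the explicit first-order description of that family as validated initial data, I would set up a Newton--Kantorovich / radii-polynomial scheme --- interval arithmetic on the truncated coefficients, with rigorous tail and contraction bounds for the delayed Newtonian nonlinearity --- and prove existence of a locally unique smooth branch of periodic orbits emanating from the $n$-gon.

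Second, I would continue this branch rigorously by a sequence of validated pseudo-arclength Newton steps until reaching the isolated symmetry-breaking bifurcation point observed numerically. At that point a straightforward continuation fails, and the passage must instead be handled by a computer-assisted Lyapunov--Schmidt reduction: verify that the relevant Floquet multiplier crosses the unit circle transversally, that the kernel there is one-dimensional within the appropriate isotypic component, and that the reduced bifurcation equation has the expected pitchfork-type normal form, so that a genuinely new branch is born. One then re-initializes the validated continuation on that new branch and follows it through the intermediate families of spatial torus-knot choreographies, certifying each additional fold or bifurcation encountered along the way, until the branch enters a neighborhood of the figure eight --- whose existence and approximate shape are already available from an existing computer-assisted proof. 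Concatenating the validated pieces produces a connected, piecewise-smooth path of periodic orbits joining the $n$-gon to the eight, which establishes Conjecture \ref{ourConjecture} for that particular $n$ and thereby, incidentally, Marchal's Conjecture \ref{theConjecture} in the case $n=3$.

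The hard part --- and the reason the statement is phrased as a conjecture rather than a theorem --- is the step from ``true for $n=3,5,\dots,15$'' to ``true for \emph{all} odd $n$.'' Each finite-$n$ proof is laborious but in principle routine; an all-$n$ statement requires an argument uniform in the number of bodies. I would look for one in the equivariant global bifurcation results of \cite{GaIz11,GaIz13}, which already guarantee that the vertical Lyapunov continuum attached to the $n$-gon is a tube parameterized by frequency that must terminate in one of four alternatives: the Sobolev norm blows up, the period blows up, the family reaches a collision, or it returns to another relative equilibrium. The program would be to (i) rule out the norm-blowup, period-blowup and collision alternatives along the relevant symmetry isotype, \emph{uniformly in $n$}, which forces the branch to undergo a symmetry-breaking bifurcation, and then (ii) control the post-bifurcation branch well enough to guarantee it limits on the eight --- plausibly via an asymptotic analysis as $n\to\infty$ comparing the rotating $n$-gon problem to a mean-field or vortex-type limit in which the figure eight persists. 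Obtaining quantitative, $n$-independent a priori bounds of this kind is precisely the difficulty that F\'{e}joz identifies as the obstruction to a variational proof, and I expect it to remain the principal obstacle here as well.
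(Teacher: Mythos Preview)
The statement under discussion is a \emph{conjecture}, and the paper does not prove it: the authors explicitly offer only numerical evidence (continuations for $n=3,5,\dots,15$) and state in the Conclusion that computer-assisted proofs for these finitely many cases are the subject of work in preparation. There is therefore no ``paper's own proof'' to compare against.

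Your proposal is not a proof either, and you say as much. What you have written is a research program, and it is in fact closely aligned with the program the paper itself sketches in Section~\ref{sec:conclusions}: validated continuation in the delay formulation of \cite{ourTorusKnots}, a rigorous treatment of the single symmetry-breaking bifurcation via computer-assisted Lyapunov--Schmidt reduction, and concatenation of validated arcs to connect the $n$-gon to the eight for each fixed odd $n$. This is reasonable and, for any given $n$, in principle executable with existing technology (cf.\ the references in the Conclusion). You are also correct that this would settle Conjecture~\ref{theConjecture} when $n=3$.

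The genuine gap you identify is exactly the one the paper leaves open: nothing in your outline yields a proof for \emph{all} odd $n$. Your proposed route through the global alternative of \cite{GaIz11,GaIz13} is speculative; ruling out norm blow-up, period blow-up, and collision uniformly in $n$ along the relevant isotypic branch is not something those results provide, and your appeal to a mean-field or vortex-type limit as $n\to\infty$ is a heuristic, not an argument. You acknowledge this, and your final sentence is accurate: the $n$-independent a priori bounds are the obstruction, and neither the paper nor your proposal overcomes it. In short, your submission is a sound description of how one might attack the conjecture, consistent with the authors' own stated plans, but it is not a proof and should not be read as one.
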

 
We present numerical continuation results
for every odd number of bodies from $n = 3$ to $n = 15$,
as evidence in support of the conjecture.  Again, the three and 
fifteen body eights at the end of the continuation
 are illustrated in  Figure \ref{fig:Eights}.
It is also very interesting to report that in each case
the qualitative features of the continuation are the same.
In the symmetrized version of the problem, 
the $n$-body figure eight occurs always after a 
single axial bifurcation from the vertical Lyapunov family associated 
with the regular $n$-gon.

\begin{remark}[DDEs versus ODEs] \label{rem:dde_ode}
In one sense passing to a system of DDEs provides a dramatic reduction in the
dimension of the problem, as we obtain a six scalar equations describing a single body 
instead of a system $6n$ scalar equations for all $n$ bodies.
On the other hand, the appearance of delays
in the problem could be viewed as a major technical disadvantage.
This is because initial
value problems for DDEs lead to infinite dimensional complications, while
the $n$-body problem is inherently finite dimensional.
Nevertheless, after restricting ones attention to periodic solutions and
projecting into Fourier space, both the delay and the differential operators
are reduced to diagonal operations in the space of complex Fourier coefficients.
Thanks to this observation it is the case that, after projecting to Fourier
space, studying a periodic orbit of a system of $6$ DDEs is in principle no
more difficult than studying a periodic orbit in a system of $6$ ODEs.
Moreover, explicitly incorporating the choreographic symmetries into the 
problem results in fewer bifurcations along the periodic branch than would 
be encountered if we continued the vertical family in the full rotating $n$-body 
problem.  To put it another way, in the $n$-body problem a periodic orbit 
bifurcating from a choreography need not be a choreography.  While 
in the DDE, we only see bifurcations that result in new branches containing dense sets of choreographies.
\end{remark}

The remainder of the paper is organized as follows. In 
Section \ref{sec:ddeFormulation}, we review the functional analytic 
formulation of the $n$-body choreography problem developed in 
\cite{ourTorusKnots}. In Section \ref{sec:continuation} we 
discuss the results of a number of numerical continuations, where
we examine (in Section \ref{sec:stability}) the stability of the 
orbits. Finally, in Section \ref{sec:conclusions} we discuss the prospects
for applying computer-assisted methods of proof to the 
generalized conjecture for finite numbers of bodies.

\section{Functional analytic formulation of the \boldmath$n$\unboldmath-body choreography problem}
\label{sec:ddeFormulation}

We describe the movement of the $n$ bodies in a rotating frame with frequency
$\sqrt{s_{1}}$, where%
\begin{equation}
s_{1} \bydef \frac{1}{4}\sum_{j=1}^{n-1}\frac{1}{\sin(j\zeta/2)},\qquad\zeta
\bydef \frac{2\pi}{n}.
\end{equation}
We assume that the masses of the $n$ bodies are equal to $1$. Thus the
$2\pi/\omega$-periodic solutions of the $n$-body problem in the rotating frame
are solutions in the inertial frame of the form
\[
q_{j}(t)=e^{\sqrt{s_{1}}t\bar{J}}u_{j}(\omega t), \qquad j=1,\dots,n
\]
where $u_{j}(t)$ are $2\pi$-periodic functions and $\bar{J}=J\oplus0$ with $J$ the usual
symplectic matrix in $\mathbb{R}^{2}$. Therefore Newton equations in the
coordinates $u_{j}$ read as%
\begin{equation} 
\left(  \omega\partial_{t}+\sqrt{s_{1}}\bar{J}\right)  ^{2}u_{j}=\nabla
_{u_{j}}U=-\sum_{i=1(i\neq j)}^{n}\frac{u_{j}-u_{i}}{\left\Vert u_{j}%
-u_{i}\right\Vert ^{3}}, \label{Ne}%
\end{equation}
where $U$ is the potential energy%
\[
U=\sum_{i<j}\frac{1}{\left\Vert u_{j}-u_{i}\right\Vert }.
\]
Actually, the frequency of rotation is chosen to be$\sqrt{s_{1}}$ such that
the $n$-polygon comprised of $n$ bodies on the unit circle, $a_{j} \bydef (\cos
j\zeta,\sin j\zeta,0)$ for $j=1,...,n$, is an equilibrium solution of
equations (\ref{Ne}), for instance see \cite{GaIz11}.

Set $u=(u_{1},...,u_{n}),$ $a=(a_{1},...,a_{n})$ and $\mathcal{\bar{J}%
} \bydef \bar{J}\oplus...\oplus\bar{J}.$ The linearization of the equation (\ref{Ne})
at the polygonal equilibrium $a$ is
\begin{equation}
\left(  \omega\partial_{t}+\sqrt{s_{1}}\mathcal{\bar{J}}\right)  ^{2}%
u=D^{2}U(a)u.\label{LNe}%
\end{equation}
As a particular consequence of the results obtained in \cite{GaIz13}, we have
that $u(t)=\emph{Re\,(}e^{it}w_{k})$ is a periodic solution of the
linearized system (\ref{LNe}) with frequency $\omega=\sqrt{s_{k}}$ for $k=1,...,n-1$, where
\[
s_{k}\bydef\frac{1}{4}\sum_{j=1}^{n-1}\frac{\sin^{2}(kj\zeta/2)}{\sin^{3}%
(j\zeta/2)}%
\]
and $w_{k} \bydef \left(  w_{k}^{1},...,w_{k}^{n}\right)  $ is a complex vector with
components $w_{k}^{j}=(0,0,e^{j(ik\zeta)})$. This implies that the linearized
system at the polygon (\ref{LNe}) has periodic solutions with frequency
$\sqrt{s_{k}}$. However, since $s_{k}=s_{n-k}$ for $k\in\lbrack1,n/2)\cap
\mathbb{N}$, then the set of solutions with the frequency $\omega=\sqrt{s_{k}%
}$ are in $1:1$ resonance. Furthermore, for the case $\omega=\sqrt{s_{1}}$
there are extra resonances with other frequencies corresponding to planar
components, see \cite{GaIz13} and \cite{ChFe08} for details.

In \cite{ChFe08}, it is proven that there are families of periodic solutions that persist near the polygonal equilibrium for the  $1:1$ resonance frequencies $\omega=\sqrt{s_{k}}$ 
using Weinstein-Moser theory.
Actually,
in \cite{GaIz13} it is proven that these families form a global continuous branch
of solutions (vertical Lyapunov families) with symmetries%
\begin{equation}
u_{j}(t)=e^{\ j\bar{J}\zeta}u_{n}(t+jk\zeta)\text{.}\label{Sy}%
\end{equation}
The existence of a dense set of choreographies in
the vertical Lyapunov families was first pointed out in \cite{ChFe08}.
Later on, in \cite{CaDoGa18} was observed that if $p$ and $q$ are relatively
prime such that
\begin{equation}
kq-p\in n\mathbb{Z},\label{Di}%
\end{equation}
an orbit in the vertical Lyapunov family with the symmetries of (\ref{Sy})
having frequency $\omega=\sqrt{s_{1}}p/q$ is a \emph{simple choreography} in
the inertial reference frame.  Since the
set of number$\sqrt{s_{1}}p/q$ with $p$ and $q$ satisfying the diophantine
equation (\ref{Di}) is dense, if the frequency $\omega$ varies continuously
along the Lyapunov family varies, then there are infinitely many simple
choreographies in the inertial frame.

\begin{remark}
\label{Rem}A consequence of Proposition 3 in \cite{ourTorusKnots} is that
if $u_{n}(t)$ is a solution in the axial family with $p$ and $q$ satisfying (\ref{Di}) and its orbit does not wind around the $z$-axis, then the choreography winds 
on the surface of a toroidal manifold with
winding numbers $p$ and $q$, \textit{i.e.}, the choreographic path is a
$(p,q)$-torus knot. Since the figure eight is a singular $(2,1)$-torus knot, according to this
principle we look for a orbit with $(p,q)=(2,1)$, i.e., our target frequency
$\omega$ is
\[
\omega/\sqrt{s_{1}}=2.
\]
The condition (\ref{Di}) becomes that $k-2\in n\mathbb{Z}$. Thus the only natural match to
find the $(2,1)$-torus knot (figure eight) is the branch with $k=2$. We confirm numerically that branches with $k=2$  effectively contain the figure eight choreographies.
\end{remark}

\subsection{The system reduced by symmetries}

The purpose of this section is to build a self contained setting to present a
systematical approach to obtain numerical computations of the periodic solutions arising from the polygonal
relative equilibrium of the $n$-body problem. In a subsequent paper we plan to present  
rigorous validations of these families. We proceed by imposing the symmetries
(\ref{Sy}) in the system of equations (\ref{Ne}), i.e. the system of equations is reduced to the
following single equation with multiple delays for the $n$-th body
\begin{equation}
\mathcal{G}(u;\omega)\,\overset
{\mbox{\tiny\textnormal{\raisebox{0ex}[0ex][0ex]{def}}}}{=}\,\left(
\omega\partial_{t}+\sqrt{s_{1}}\bar{J}\right)  ^{2}u+G(u)=0,\label{DDE}%
\end{equation}
where $u(t)\in\mathbb{R}^{3}$ is the position of the $n$-th body and $G$
is the nonlinearity%
\[
G(u)=\sum_{j=1}^{n-1}\frac{u-e^{\ j\bar{J}\zeta}u(t+jk\zeta)}{\left\Vert
u-e^{\ j\bar{J}\zeta}u(t+jk\zeta)\right\Vert ^{3}}.
\]

The polygonal equilibrium $a$ has the $n$-th body in the position%
\[
u_{0}=\left(  1,0,0\right)  \text{,}%
\]
i.e. $u_{0}$ is an equilibrium of equation (\ref{DDE}). The linearization of
$\mathcal{G}$ at $u_{0}$ is
\[
D\mathcal{G}(u_{0};\omega)=\left(  \omega\partial_{t}+\sqrt{s_{1}}\bar
{J}\right)  ^{2}u+DG(u_{0}).
\]
Based on the previous discussion we see that the $n$-th component 
of $u(t)=\emph{Im\,(}e^{it}w_{k})$, denoted  by
\[
u_{1}=\left(  0,0,\sin
t\right) ,
\]
is in the kernel of $D\mathcal{G}(u_{0};\sqrt{s_{k}})$. Actually, to conclude that
\[
D\mathcal{G}(u_{0};\sqrt{s_{k}})u_{1}=\left(  s_{k}\partial_{t}^{2}%
+DG(u_{0})\right)  u_{1}=0,
\]
we only need to compute that $DG(u_{0})\,u_{1}=s_{k}u_{1}$.
In the following proposition we present a self-contained proof of this fact,

\begin{lemma}
It holds that
\[
DG(u_{0})\,u_{1}=s_{k}u_{1}.
\]

\end{lemma}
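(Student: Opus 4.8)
The claim is a direct computation of the action of $DG(u_0)$ on the vector $u_1 = (0,0,\sin t)$. The plan is to differentiate the nonlinearity $G$ term by term at the equilibrium $u_0 = (1,0,0)$ and then evaluate the resulting linear map on $u_1$. First I would write $u = u_0 + \epsilon v$ with $v(t) = (0,0,\sin t)$ and compute the directional derivative $\frac{d}{d\epsilon}\big|_{\epsilon = 0} G(u_0 + \epsilon v)$. Each summand of $G$ has the form $N(w)/\|w\|^3$ with $w = u - e^{j\bar J\zeta}u(t + jk\zeta)$; at the equilibrium this argument becomes $w_0^{(j)} = u_0 - e^{j\bar J\zeta}u_0 = (1 - \cos j\zeta,\, -\sin j\zeta,\, 0)$, whose norm is $\|w_0^{(j)}\| = 2\sin(j\zeta/2)$, using the half-angle identity. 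This is the source of the $\sin^3(j\zeta/2)$ in the denominator of $s_k$.

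**Key steps.** The perturbation to the $j$-th argument is $\delta w^{(j)}(t) = v(t) - e^{j\bar J\zeta}v(t + jk\zeta)$. Since $v$ has only a $z$-component and $e^{j\bar J\zeta} = e^{j J\zeta}\oplus 1$ acts as the identity on the $z$-axis, we get $\delta w^{(j)}(t) = (0,0,\sin t - \sin(t + jk\zeta))$, again purely in the $z$-direction. Now differentiating $N(w)/\|w\|^3 = w/\|w\|^3$: the derivative is $\frac{1}{\|w\|^3}\,\delta w - \frac{3\,\dotp*{w}{\delta w}}{\|w\|^5}\,w$. The crucial simplification is that $w_0^{(j)}$ lies in the $xy$-plane while $\delta w^{(j)}$ lies along the $z$-axis, so $\dotp*{w_0^{(j)}}{\delta w^{(j)}} = 0$ and the second (rank-one) term vanishes identically. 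Hence $DG(u_0)v = \sum_{j=1}^{n-1} \frac{1}{\|w_0^{(j)}\|^3}\,\delta w^{(j)} = \sum_{j=1}^{n-1} \frac{1}{8\sin^3(j\zeta/2)}\,(0,0,\sin t - \sin(t+jk\zeta))$.

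**Finishing the trigonometric reduction.** It remains to show this equals $s_k u_1 = \left(0,0,\tfrac{1}{4}\sum_{j=1}^{n-1}\tfrac{\sin^2(kj\zeta/2)}{\sin^3(j\zeta/2)}\sin t\right)$. The $z$-component of the left side is $\sum_j \frac{1}{8\sin^3(j\zeta/2)}(\sin t - \sin(t + jk\zeta))$. Here I would use the pairing symmetry $j \leftrightarrow n-j$: since $\sin((n-j)\zeta/2) = \sin(\pi - j\zeta/2) = \sin(j\zeta/2)$ the coefficients match, while $\sin(t + (n-j)k\zeta) = \sin(t + nk\zeta - jk\zeta) = \sin(t - jk\zeta)$ (using $n\zeta = 2\pi$). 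Averaging the $j$ and $n-j$ terms replaces $\sin(t+jk\zeta)$ by $\tfrac{1}{2}(\sin(t+jk\zeta) + \sin(t-jk\zeta)) = \sin t\cos(jk\zeta)$. Then $\sin t - \sin t \cos(jk\zeta) = \sin t\,(1 - \cos(jk\zeta)) = 2\sin t\sin^2(jk\zeta/2)$, and the factor $\tfrac{1}{8}\cdot 2 = \tfrac{1}{4}$ yields exactly $s_k u_1$.

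**Main obstacle.** The computation is essentially routine once the geometric decoupling is spotted; the only real point requiring care is justifying that the $j \leftrightarrow n-j$ pairing is legitimate, i.e. handling whether $n-1$ is even or odd and what happens when $n$ is even and $j = n/2$ (the self-paired term). Since the paper's main interest is odd $n$, for odd $n$ the index set $\{1,\dots,n-1\}$ splits into clean pairs and the argument is immediate; for even $n$ the middle term $j = n/2$ already satisfies $\sin(t + (n/2)k\zeta) = \sin(t + k\pi) = (-1)^k\sin t$, so the identity $\sin t - (-1)^k \sin t = 2\sin^2(k\pi/2)\sin t = 2\sin^2((n/2)k\zeta/2)\sin t$ holds directly without pairing. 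I expect the write-up to simply do the term-by-term derivative, observe the orthogonality $\dotp*{w_0^{(j)}}{\delta w^{(j)}} = 0$, and then apply the half-angle identity $1 - \cos\theta = 2\sin^2(\theta/2)$ together with the $j \leftrightarrow n-j$ symmetrization.
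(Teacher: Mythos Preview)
Your argument is correct. The paper proves the same identity but packages the computation differently: it complexifies, setting $u_1^* = (0,0,e^{it})$, expands $G(u_0 + \sigma_1 u_1^*)$ using the Taylor series of $(1-x)^{-3/2}$, differentiates in $\sigma_1$, and then takes the imaginary part at the end; the sum $\sum_j (1-e^{ijk\zeta})/(8\sin^3(j\zeta/2))$ reduces to $s_k$ after dropping the imaginary part (which vanishes by the same $j\leftrightarrow n-j$ symmetry you invoke) and applying $1-\cos\theta = 2\sin^2(\theta/2)$. Your real-variable route makes explicit the geometric reason the linearization decouples---the equilibrium displacement $w_0^{(j)}$ lies in the $xy$-plane while the perturbation $\delta w^{(j)}$ is purely vertical, so the rank-one term $-3\langle w,\delta w\rangle\,w/\|w\|^5$ dies---whereas in the paper this orthogonality is hidden in the fact that the ``norm'' of the complexified argument has no term linear in $\sigma_1$. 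Both proofs ultimately rest on the same two ingredients (the orthogonality and the $j\leftrightarrow n-j$ symmetrization), so the difference is one of presentation: the paper's complex-exponential bookkeeping is a bit more compact, while your version is more transparent about why the vertical direction is an eigendirection.
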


\begin{proof}
We start by considering the nonlinear function $G(u_{0}+\sigma_{1}u_{1}^{\ast
})$, where  $u_{1}=Im\,u_{1}^{\ast}=(0,0,\sin t)^{T},$ and 
$u_{0}=(1,0,0)$ and
$u_{1}^{\ast}=(0,0,e^{it})$. We compute the derivative using the formula 
$DG(u_{0})u_{1}=\emph{Im~}\left(  \partial_{\sigma_{1}}G(u_{0}+\sigma_{1}%
u_{1}^{\ast})|_{\sigma_{1}=0}\right)  $. Set$\ $%
\[
c=-\frac{(1-e^{ijk\zeta})^{2}}{2-2\cos(j\zeta)}=e^{ijk\zeta}\frac{\sin
^{2}(jk\zeta/2)}{\sin^{2}(j\zeta/2)}\in\mathbb{C}\text{.}%
\]
We use the Taylor expansion
\[
(1-x)^{-3/2}=1+{\frac{3}{2}}x+{\frac{15}{8}}x^{2}+{\frac{35}{16}}x^{3}+...,
\]
to compute%
\[%
\begin{split}
G\left(
\begin{array}
[c]{c}%
1\\
0\\
\sigma_{1}e^{it}%
\end{array}
\right)   &  =\sum_{j=1}^{n-1}\left(  \frac{1}{2^{3}\sin^{3}(j\zeta/2)\left(
1-c\sigma_{1}^{2}e^{2it}\right)  ^{3/2}}\left(
\begin{array}
[c]{c}%
1-\cos(j\zeta)\\
-\sin(j\zeta)\\
\sigma_{1}e^{it}(1-e^{ijk\zeta})
\end{array}
\right)  \right)  \\
&  =\sum_{j=1}^{n-1}\frac{1}{2^{3}\sin^{3}(j\zeta/2)}\left(  1+{\frac{3}{2}%
}\sigma_{1}^{2}\left(  ce^{2it}\right)  +...\right)  \left(
\begin{array}
[c]{c}%
1-\cos(j\zeta)\\
-\sin(j\zeta)\\
\sigma_{1}e^{it}(1-e^{ijk\zeta})
\end{array}
\right)  .
\end{split}
\]
Therefore, we have that%
\[%
\left.  \partial_{\sigma_{1}}G(u_{0}+\sigma_{1}u_{1}^{\ast})\right\vert
_{\sigma_{1}=0}=\left(  \sum_{j=1}^{n-1}\frac{1-e^{ijk\zeta}}{2^{3}\sin
^{3}(j\zeta/2)}\right)  \left(
\begin{array}
[c]{c}%
0\\
0\\
e^{it}%
\end{array}
\right)  .
\]
Since
\[
\sum_{j=1}^{n-1}\frac{1-e^{i(jk\zeta)}}{2^{3}\sin^{3}(j\zeta/2)}=\sum
_{j=1}^{n-1}\frac{1-\cos kj\zeta}{2^{3}\sin^{3}(j\zeta/2)}=\frac{1}{4}%
\sum_{j=1}^{n-1}\frac{\sin^{2}(jk\zeta/2)}{\sin^{3}(j\zeta/2)}=s_{k}
\]
we obtain that
\[ 
DG(u_{0})\,u_{1}=\emph{Im~}\left(  \partial_{\sigma_{1}}G(u_{0}+\sigma
_{1}u_{1}^{\ast})|_{\sigma_{1}=0}\right)  =s_{k}u_{1}. 
\]
\end{proof}

By imposing the symmetries (\ref{Sy}), the linear operator $D_{u}%
\mathcal{G}(u_{0};\sqrt{s_{k}})$ for $k\in\lbrack2,n/2]\cap\mathbb{N}$ has
only half of the kernel of the $1:1$ resonance of the linearized system
(\ref{LNe}). It is important to mention that the kernel of $D\mathcal{G(}%
u;\sqrt{s_{k}})$ still has a high dimension; the dimension of the kernel of
$D\mathcal{G(}u;\omega)$ is at least $3$ for the periodic solutions. This is due to the existence
of a $3$-dimensional group of symmetries corresponding to $xy$-rotations,
$z$-translations and time shift. In the following section, we use an augmented
system that reduces the dimension of the kernel generated by these symmetries.

\subsection{The augmented system}

The problem with the kernel of $D\mathcal{G}(u_{0},\omega)$ generated by
the symmetries by $xy$-rotations, $z$-translations and time shift, is solved
by augmenting the map in order to isolate the orbits of solutions. In \cite%
{ourTorusKnots} we present the augmented map that also turns the
non-polynomial DDE into a higher dimensional DDE with polynomial
nonlinearities.

The augmented system with polynomial nonlinearities is given by 
\begin{align}
f(u,v)& \bydef \partial _{t}u-v  \label{eq:f_phase_space} \\
g(\lambda ,u,v,w;\omega )& \bydef \omega ^{2}\partial _{t}v+2\omega \sqrt{s_{1}}%
\bar{J}v-s_{1}\bar{I}u+P(u,w)+\lambda _{1}\bar{J}u+\lambda _{2}v+\lambda
_{3}e_{3} \label{eq:g_phase_space}  \\
h(\alpha ,u,v,w)& \bydef \left\{ \partial _{t}w_{j}+w_{j}^{3}\left\langle
v(t)-e^{\ j\bar{J}\zeta }v(t+jk\zeta ),u(t)-e^{\ j\bar{J}\zeta }u(t+jk\zeta
)\right\rangle +\alpha _{j}w_{j}^{3}\right\} _{j=1}^{n-1}, \label{eq:h_phase_space} 
\end{align}%
where $w=\left\{ w_{j}\right\} _{j=1}^{n-1}$, $e_{3}=(0,0,1)$ and $P$ is the
polynomial nonlinearity with delays%
\begin{equation}\label{P-equation}
P(u,w) \bydef \sum_{j=1}^{n-1}w_{j}^{3}\left( u(t)-e^{\ j\bar{J}\zeta }u(t+jk\zeta
)\right) .
\end{equation}%
These equations are supplemented by the Poincar\'{e} sections $\eta (u)\bydef%
(I_{1},I_{2},I_{3})=0$, where 
\begin{equation*}
I_{1}(u) \bydef \int_{0}^{2\pi }u(t)\cdot \bar{J}\tilde{u}(t)~dt,\qquad
I_{2}(u) \bydef \int_{0}^{2\pi }u(t)\cdot \tilde{u}'(t)~dt,\qquad
I_{3}(u) \bydef \int_{0}^{2\pi }u_{3}(t)~dt,
\end{equation*}
where $\tilde{u}$ is a reference  function, and the initial conditions%
\begin{equation*}
\gamma (u,w)\bydef\left\{ w_{j}(0)^{2}\left\Vert u(0)-e^{\ j\bar{J}\zeta
}u(jk\zeta )\right\Vert ^{2}-1\right\} _{j=1}^{n-1}=0.
\end{equation*}%
Actually, in Proposition 4 and 5 of \cite{ourTorusKnots} it is proved that the
solutions of $\mathcal{G}(u,\omega )=0$ are equivalent to the solution of
the augmented system of equations%
\begin{equation*}
F(x;\omega ) = \left( \eta ,\gamma ,f,g,h\right) (x;\omega )=0,\qquad x\bydef%
(\lambda ,\alpha ,u,v,w)\text{.}
\end{equation*}

We set the equilibrium for the augmented system as 
\begin{equation}  \label{eq:equilibrium_polygon}
x_{0}=\left( 0,0,u_{0},0,w_{0}\right) ,
\end{equation}
where $w_{0}=\left\{ w_{j,0}\right\} _{j=1}^{n-1}$ with 
\begin{equation*}
w_{j,0}=\frac{1}{\left\Vert (1,0,0)-(\cos j\zeta,-\sin
j\zeta,0)\right\Vert }=\frac{1}{2\sin j\zeta/2}\text{.%
}
\end{equation*}
Using that $u_{0}$ is a steady solution of $\mathcal{G}$ it is not difficult
to see that $x_{0}$ is a steady solution of the augmented system $%
F(x_{0};\omega)=0$ for all $\omega.$

Set 
\begin{equation}  \label{eq:tangent_at_polygon}
x_{1} \bydef (0,0,u_{1},v_{1},0)
\end{equation}
with $u_{1}=(0,0,\sin t)$ and $v_{1}=(0,0,\cos t)$. In the following
proposition we prove that $x_{1}$ is the natural extension of the element of
the kernel of the augmented map $F$.
Notice that from the definition of $P(u,w)$ in \eqref{P-equation}, we have that,
\begin{align}\label{partial_P}
\partial _{u}P(u_{0},w_{0})u_{1}=&\sum_{j=1}^{n-1}w_{j,0}^{3}\left( u_{1}(t)-e^{\ j\bar{J}\zeta
}u_{1}(t+jk\zeta )\right)
= \sum_{j=1}^{n-1}\frac{\sin^{2}(jk\zeta/2)}{\sin^{3}(j\zeta/2)} u_1
= DG(u_{0})u_{1}\text{.}
\end{align}

\begin{proposition}
\label{prop:kernel_at_polygon} It holds that 
\begin{equation*}
DF(x_{0};\sqrt{s_{k}})x_{1}=0\text{.}
\end{equation*}
\end{proposition}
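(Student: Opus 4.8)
The plan is to check that the linearization $DF(x_0;\sqrt{s_k})x_1$ vanishes block by block, since $F=(\eta,\gamma,f,g,h)$. Throughout I would use that the perturbation $x_1=(0,0,u_1,v_1,0)$ has vanishing $\lambda$-, $\alpha$- and $w$-components, and that $u_1=(0,0,\sin t)$ and $v_1=(0,0,\cos t)$ are both supported on the $z$-axis, with $u_1(0)=0$. The $f$-block is immediate: $Df(x_0)x_1=\partial_t u_1-v_1=(0,0,\cos t)-(0,0,\cos t)=0$.

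The heart of the computation is the $g$-block. First I would observe that differentiating $\lambda_1\bar Ju+\lambda_2 v+\lambda_3 e_3$ at $x_0$ along $x_1$ gives zero, because $\lambda=0$ at $x_0$ and $x_1$ has no $\lambda$-component; likewise $\partial_w P(u_0,w_0)$ is applied to the zero $w$-direction. What survives is $Dg(x_0;\omega)x_1=\omega^{2}\partial_t v_1+2\omega\sqrt{s_1}\,\bar J v_1-s_1\bar I u_1+\partial_u P(u_0,w_0)u_1$. Then I would use: $\partial_t v_1=-u_1$; that $\bar J v_1=0$ and $\bar I u_1=0$, since $\bar J=J\oplus 0$ and $\bar I=-\bar J^{2}$ both kill the $z$-component while $v_1,u_1$ are purely in $z$; and the identity \eqref{partial_P} together with the preceding Lemma, which give $\partial_u P(u_0,w_0)u_1=DG(u_0)u_1=s_k u_1$. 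Substituting yields $Dg(x_0;\omega)x_1=(s_k-\omega^{2})u_1$, which is zero exactly at $\omega=\sqrt{s_k}$.

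For the $h$-block, since $v\equiv 0$ and $\alpha=0$ at $x_0$, differentiating $h_j=\partial_t w_j+w_j^{3}\langle v(t)-e^{\ j\bar J\zeta}v(t+jk\zeta),\,u(t)-e^{\ j\bar J\zeta}u(t+jk\zeta)\rangle+\alpha_j w_j^{3}$ along $x_1$ leaves only the term in which the $v$-slot is differentiated, namely $Dh_j(x_0)x_1=w_{j,0}^{3}\langle v_1(t)-e^{\ j\bar J\zeta}v_1(t+jk\zeta),\,u_0-e^{\ j\bar J\zeta}u_0\rangle$. Since $e^{\ j\bar J\zeta}$ only rotates the $xy$-plane, the first vector remains supported on the $z$-axis, while $u_0-e^{\ j\bar J\zeta}u_0=(1-\cos j\zeta,-\sin j\zeta,0)$ lies in the $xy$-plane; hence the inner product is identically zero. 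For the $\eta$- and $\gamma$-blocks I would argue similarly, using that $u_1$ is $z$-supported with pure first-harmonic time dependence and vanishes at $t=0$: one gets $I_3(u_1)=\int_0^{2\pi}\sin t\,dt=0$, $I_1(u_1)=\int_0^{2\pi}u_1\cdot\bar J\tilde u\,dt=0$ because $\bar J\tilde u$ has no $z$-component, and $I_2(u_1)=0$ for the natural reference $\tilde u=u_1$ (or any $\tilde u$ whose $z$-component is orthogonal to $\cos t$); while $D\gamma_j(x_0)x_1=2w_{j,0}^{2}\langle u_0-e^{\ j\bar J\zeta}u_0,\,u_1(0)-e^{\ j\bar J\zeta}u_1(jk\zeta)\rangle$ reduces, using $u_1(0)=0$, to the pairing of the $xy$-plane vector $u_0-e^{\ j\bar J\zeta}u_0$ with the $z$-vector $(0,0,-\sin jk\zeta)$, which vanishes.

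Collecting the five blocks gives $DF(x_0;\sqrt{s_k})x_1=0$. I do not expect a genuine obstacle here: the only places requiring care are the bookkeeping in the $g$-block (tracking which terms vanish because $\lambda=0$ and $\delta\lambda=\delta\alpha=\delta w=0$ along $x_1$) and the choice of reference function $\tilde u$ that makes $I_2(u_1)=0$; the single nontrivial analytic input, the eigenrelation $DG(u_0)u_1=s_k u_1$, is already in hand from the preceding Lemma.
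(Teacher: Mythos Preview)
Your proposal is correct and follows essentially the same block-by-block verification as the paper's proof: both check the five components $\eta,\gamma,f,g,h$ separately, use the same reference choice $\tilde u=u_1$ for the Poincar\'e sections, and rely on the orthogonality between the $z$-supported perturbations $u_1,v_1$ and the $xy$-plane quantities $\bar J\tilde u$, $u_0-e^{j\bar J\zeta}u_0$ together with the eigenrelation $DG(u_0)u_1=s_k u_1$ from the preceding Lemma. The only cosmetic difference is that in the $g$-block the paper simply identifies the surviving expression with $D_u\mathcal{G}(u_0;\sqrt{s_k})u_1$ and invokes its vanishing, whereas you unpack this explicitly by noting $\bar J v_1=\bar I u_1=0$ and $\partial_t v_1=-u_1$ to obtain $(s_k-\omega^2)u_1$; this is the same computation made more transparent.
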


\begin{proof}
We have that%
\begin{equation*}
DF(x_{0})x_{1}=\left( 
\begin{array}{c}
\partial _{u}\eta (x_{0})u_{1} \\ 
\partial _{u}\gamma (x_{0})u_{1} \\ 
\partial _{u}f(x_{0})u_{1}+\partial _{v}f(x_{0})v_{1} \\ 
\partial _{u}g(x_{0})u_{1}+\partial _{v}g(x_{0})v_{1} \\ 
\partial _{u}h(x_{0})u_{1}+\partial _{v}h(x_{0})v_{1}%
\end{array}%
\right) .
\end{equation*}
At the $n$-gon, we set as reference functions $\tilde{u}=u_{1}$ and $\tilde{u}'=u_1'=v_{1}$, so
\begin{equation*}
\partial _{u}\eta
(u_{0})u_{1}=\int_{0}^{2\pi }\left( u_{1}\cdot \bar{J}u_{1},u_{1}(t)\cdot
v_{1}(t),\sin t\right) =0.
\end{equation*} 
For the derivative of $\gamma$,  we have, 
\begin{equation*}
\partial _{u}\gamma (x_{0})u_{1}=\left\{ 2w_{j,0}(0)^{2}\left\langle
u_{0}(0)-e^{\ j\bar{J}\zeta }u_{0}(jk\zeta ),u_{1}(0)-e^{\ j\bar{J}\zeta
}u_{1}(jk\zeta )\right\rangle \right\} _{j=1}^{n-1}=0\text{.}
\end{equation*}%
The derivative of $h$ is,
\begin{equation*}
\partial _{u}h(x_{0})u_{1}=\left\{ w_{j,0}^{3}(t)\left\langle v_{0}(t)-e^{\ j%
\bar{J}\zeta }v_{0}(t+jk\zeta ),u_{1}(t)-e^{\ j\bar{J}\zeta }u_{1}(t+jk\zeta
)\right\rangle \right\} _{j=1}^{n-1}=0\text{,}
\end{equation*}%
since $v_0=0$.
Now, $v_{1}=(0,0,\cos t)$ implies that
\begin{equation*}
\partial _{v}h(x_{0})v_{1}=\left\{ w_{j,0}^{3}(t)\left\langle v_{1}(t)-e^{\ j%
\bar{J}\zeta }v_{1}(t+jk\zeta ),u_{0}(t)-e^{\ j\bar{J}\zeta }u_{0}(t+jk\zeta
)\right\rangle \right\} _{j=1}^{n-1}=0\text{.}
\end{equation*}%
Finally, $\partial _{u}f(x_{0})u_{1}+\partial _{v}f(x)v_{1}=\partial
_{t}u_{1}-v_{1}$, and from \eqref{partial_P} it follows that%
\begin{equation*}
\partial _{u}g(x_{0})u_{1}+\partial _{v}g(x_{0})v_{1}=\omega ^{2}\partial
_{t}v_{1}+2\omega \sqrt{s_{1}}\bar{J}v_{1}-s_{1}\bar{I}u_{1}+\partial
_{u}P(u_{0},w_{0})u_{1}=D_{u}\mathcal{G(}u_{0};\sqrt{s_{k}})u_{1}=0\text{.}
\end{equation*}
\end{proof}


\section{Numerical continuation from the polygon to the eight} \label{sec:continuation}

As mentioned in the introduction (e.g. see Remark~\ref{rem:dde_ode}), our computational approach to choreographies is based on Fourier expansions of the functions $u(t),v(t)$ and $w(t)$ appearing in \eqref{eq:f_phase_space}, \eqref{eq:g_phase_space} and \eqref{eq:h_phase_space}. To compute choreographies, we plug the Fourier expansions 
\begin{align}
\nonumber
u(t)  &  =
\begin{pmatrix}
u_{1}(t)\\
u_{2}(t)\\
u_{3}(t)
\end{pmatrix}
= \sum_{\ell\in\mathbb{Z}}e^{i \ell t}u_{\ell}, \quad u_{\ell} =
\begin{pmatrix}
(u_{1})_{\ell}\\
(u_{2})_{\ell}\\
(u_{3})_{\ell}%
\end{pmatrix}
\\
v(t)  &  =%
\begin{pmatrix}
v_{1}(t)\\
v_{2}(t)\\
v_{3}(t)
\end{pmatrix}
=\sum_{\ell\in\mathbb{Z}}e^{i \ell t}v_{\ell}, \quad v_{\ell} =
\begin{pmatrix}
(v_{1})_{\ell}\\
(v_{2})_{\ell}\\
(v_{3})_{\ell}%
\end{pmatrix}
\label{eq:Fourier_expansions}
\\
w(t)  &  =%
\begin{pmatrix}
w_{1}(t)\\
\vdots\\
w_{n-1}(t)
\end{pmatrix}
=\sum_{\ell\in\mathbb{Z}}e^{i \ell t}w_{\ell}, \quad w_{\ell} =
\begin{pmatrix}
(w_{1})_{\ell}\\
\vdots\\
(w_{n-1})_{\ell}
\end{pmatrix}
\nonumber
\end{align}
in equations \eqref{eq:f_phase_space}, \eqref{eq:g_phase_space}, \eqref{eq:h_phase_space}, the Poincar\'e sections $\eta(u)$ and the initial conditions $\gamma(u,w)$, which leads to a zero finding problem $F(x,\omega)=0$ (still denoted using $F$ and $x$) posed on a Banach product space of geometrically decaying Fourier sequences (see \cite{ourTorusKnots} for more details).

To perform computations, we fix a truncation order $m>0$ and truncate the Fourier series of each component of $u$, $v$ and $w$ to trigonometric polynomials of order  $m-1$. For instance, the function $u_1$ is only represented by the $2m-1$ Fourier coefficients $((u_{1})_{\ell})_{|\ell|<m}$. Similarly for the other components. After truncation, the functions $u$, $v$ and $w$ are represented respectively by $3(2m-1)$, $3(2m-1)$ and $(n-1)(2m-1)$ Fourier coefficients. Adding the unfolding parameters $\lambda \in \C^3$ and $\alpha \in \C^{n-1}$ to the set of unknowns yields a total number of $(n+5)(2m-1)+3+n-1=2m(n + 5) - 3$ of variables. Performing a similar truncation to the functions $f$, $g$ and $h$ defined in \eqref{eq:f_phase_space}, \eqref{eq:g_phase_space} and \eqref{eq:h_phase_space} leads to the {\em finite dimensional projection} $F^{(m)} : \mathbb{C}^{2m(n + 5) - 3} \to \mathbb{C}^{2m(n + 5) - 3}$ which we use to compute numerical approximations of the choreographies. To simplify the presentation, we denote $F = F^{(m)}$ and $N \bydef {2m(n + 5) - 3}$ so that computing a choreography is equivalent to compute $(x,\omega) \in \C^{N+1}$ such that $F(x,\omega) \approx 0$. 

\subsection{Pseudo-Arclength Continuation} \label{sec:PAL_Continuation}

The numerical continuation from the polygon to the figure eight exploits the pseudo-arclength continuation algorithm (e.g. see Keller \cite{MR910499}), and we briefly recall the main idea
behind this approach, which is that the pseudo-arclength is taken as the continuation parameter. 
Then the original continuation parameter, in our case the frequency $\omega$,
is not fixed and instead is left as a variable.  That is 
the vector of variables becomes $X \bydef (x,\omega) \in \C^{N+1}$. Denote by $\mathcal S \bydef \{ X \in \C^{N+1} : F(X)=0\}$ the {\em solution set}.  We aim to compute one dimensional solution curves in 
$\mathcal S$. The process begins with a solution $X_0$ given within a prescribed tolerance. To produce a {\em predictor} (that is a good numerical approximation to feed to Newton's method), we compute first a unit tangent vector to the curve at $X_0$, that we denote $\dot X_0$, which can be computed using the formula
\[
D_X F(X_0) \dot X_0 = \left[ D_x F(X_0) ~~  \frac{\partial F}{\partial \omega}(X_0) \right] \dot X_0 = 0 \in \C^N.
\]
Next fix a {\em pseudo-arclength parameter} $\Delta_s>0$, and set the predictor to be
\[
\hat{X}_1 \bydef \bar X_0 + \Delta_s \dot X_0 \in \C^{N+1}.
\]
Once the predictor is fixed, we {\em correct} toward the set $\mathcal S$ on the hyperplane perpendicular to the tangent vector $\dot X_0$ which contains the predictor $\hat{X}_1$. The equation of this plan is given by
\[
E(X) \bydef (X-\hat X_1) \cdot \dot X_0 = 0.
\]
Then, we apply Newton's method to the new function
\begin{equation} \label{eq:PAL_fonction}
X \mapsto \begin{pmatrix} E(X) \\ F(X) \end{pmatrix}
\end{equation}
with the initial condition $\hat X_1$ in order to obtain a new solution $X_1$ given again within a prescribed tolerance. We reset $X_1 \mapsto X_0$ and start over. See Figure~\ref{fig:PAL_cont} for a geometric interpretation of one step of the pseudo-arclength continuation algorithm. At each step of the algorithm, the function defined in \eqref{eq:PAL_fonction} changes since the plane $E(X)=0$ changes. With this method, it is possible to continue past folds. Repeating this procedure iteratively produces a branch of solutions.

\begin{figure}[h!]
\centering
\vspace{\baselineskip}
\includegraphics[width=.5 \textwidth]{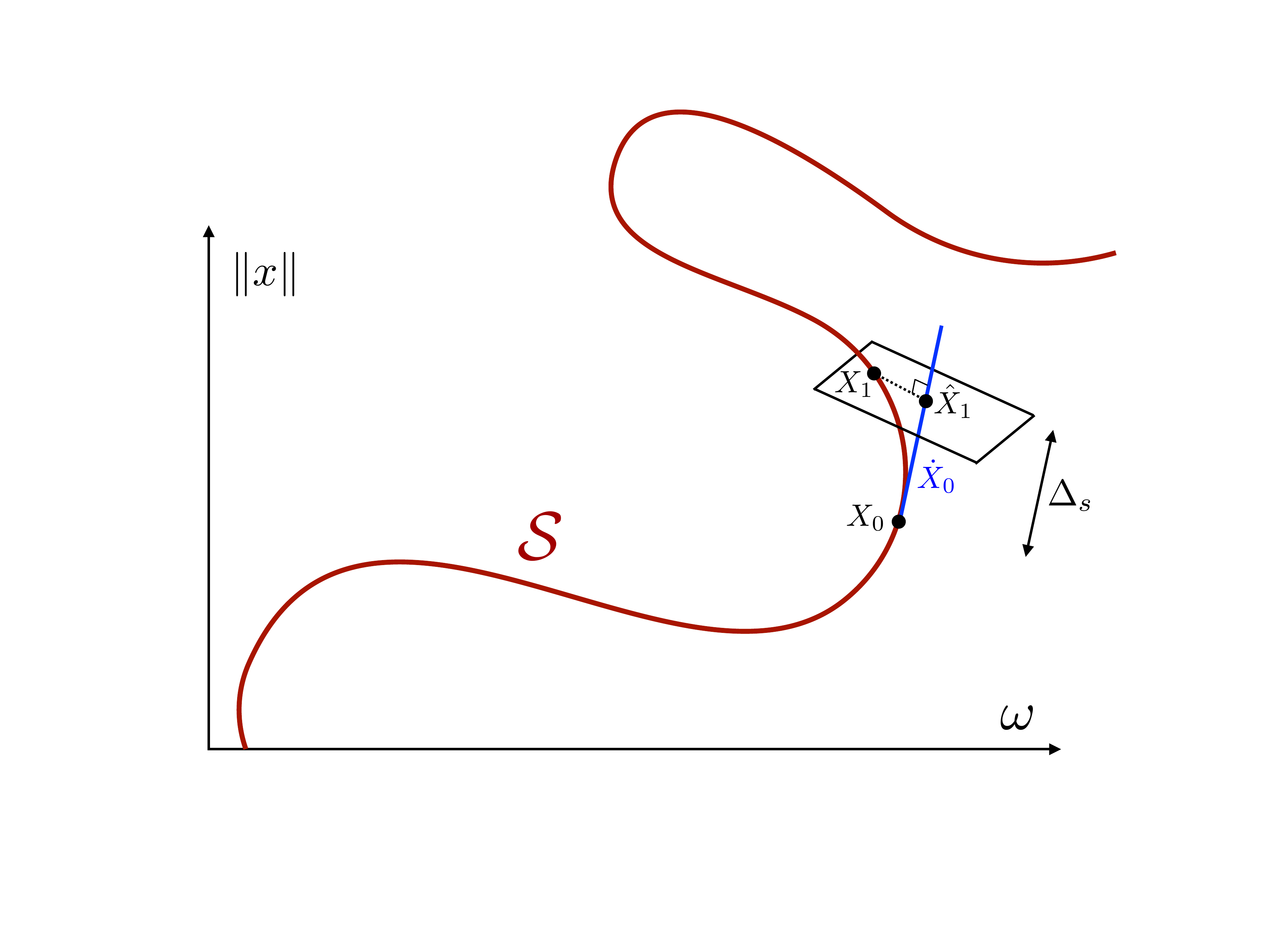}
\vspace{-.3cm}
\caption{Pseudo-arclength continuation.  By following the tangent vector to the 
solution branch and projecting into a suitable perpendicular plane, pseudo-arclength 
continuation is indifferent to fold bifurcations and allows for the following of more complicated
solution curves.}
\label{fig:PAL_cont}
\end{figure}

\subsection{From the polygon to the figure eight} \label{sec:from_polygon_to_eight}

Having introduced the pseudo-arclength continuation, we describe a numerical procedure to bifurcate away from the polygon equilibrium onto an off ($x,y$)-plane spatial family of choreographies, to detect a secondary bifurcation, to perform a branch switching, and finally to reach the figure eight choreography. This process requires a starting point $X_0$ (the polygon equilibrium) and a tangent vector $\dot X_0$ at the polygon.

\subsubsection{Initiating the continuation at the polygon}

Recall the definition of $x_0$ the polygon equilibrium \eqref{eq:equilibrium_polygon} and $x_1$ the vector given by \eqref{eq:tangent_at_polygon}. Abusing slightly the notation, denote $x_0,x_1 \in \C^N$ the corresponding vectors of Fourier coefficients. More explicitly, $x_{0}=\left(  0,0,u_{0},0,w_{0}\right) \in \C^{N}$ is defined by $u_{0}=\left( (\delta_{\ell,0})_{|\ell|<m},0,0 \right) \in \C^{3(2m-1)}$ and for $j=1,\dots,n-1$, $(w_{0})_j= \left( \frac{\delta_{\ell,0}}{2\sin j\pi/n} \right)_{|\ell|<m} \in \C^{2m-1}$. Here, $\delta_{\ell,k}$ denotes the Kronecker delta symbol. Moreover, $x_{1}=(0,0,u_{1},v_{1},0)$ is defined by $u_{1}=(0,0,u^{(3)}_{1}) \in \C^{3(2m-1)}$ and $v_{1}=(0,0,v_{1}^{(3)}) \in \C^{3(2m-1)}$ with $u^{(3)}_{1},v^{(3)}_{1} \in \C^{2m-1}$ given component-wise by
\[
\left( u^{(3)}_{1} \right)_\ell = \begin{cases}
i/2, & \ell = -1 \\
-i/2, & \ell = 1 \\
0, & \text{otherwise}
\end{cases}
\qquad
\text{and}
\qquad
\left( v^{(3)}_{1} \right)_\ell = \begin{cases}
1/2, & \ell = -1 \\
1/2, & \ell = 1 \\
0, & \text{otherwise}.
\end{cases}
\]
Denote $\omega_0 \bydef \sqrt{s_k}$, $X_0 \bydef (x_0,\omega_0)$, and recall Proposition~\ref{prop:kernel_at_polygon}. Then $F(X_0)=0 \in \C^{N}$ and $D_x F(X_0)x_1 =0 \in  \C^{N}$. Denote $\dot X_0 \bydef (x_1,0) \in \C^{N+1}$ and consider the $N \times (N+1)$ dimensional matrix $D_X F(X_0) = [D_x F(X_0) ~ D_\omega F(X_0)]$. Hence, at the polygon $X_0$, 
\[
F(X_0) = 0 \quad \text{and} \quad D_X F(X_0) \dot X_0 = 0,
\]
which provide with the data required to initiate the numerical pseudo-arclength continuation on the problem $F:\C^{N+1} \to \C^N$ as presented in Section~\ref{sec:PAL_Continuation}. We fix the pseudo-arclength parameter (the continuation step size) to be $\Delta_s = 10^{-3}$ and initiate the continuation.

\subsubsection{Switching branches at the secondary bifurcation} 

Along the continuation, we monitor two quantities, namely the sign of the determinant of the derivative of the {\em pseudo-arclength map} given in \eqref{eq:PAL_fonction} and the condition number of the derivative.  If there is a change of sign in the determinant and if the condition number is above a certain threshold (in our case $10^3$), we declare having detected a secondary bifurcation and begin a bisection algorithm to converge to a (bifurcation) point $X_{\rm bif}$ at which the determinant of the derivative of pseudo-arclength map is approximatively zero. In this case, we numerically verify that $\dim\ker(D_XF(X_{\rm bif})) = 2$, and we call $X_{\rm bif}$ a {\em simple} branching point. At $X_{\rm bif}$, there are two solution branches intersecting. Denote by $X_{\rm bif}^{(1)},X_{\rm bif}^{(2)} \in \C^{N+1}$ the two tangent vectors. See Figure~\ref{fig:branch_switching} for a graphical representation of the situation. 
\begin{figure}[h!]
\centering
\vspace{\baselineskip}
\includegraphics[width=.4 \textwidth]{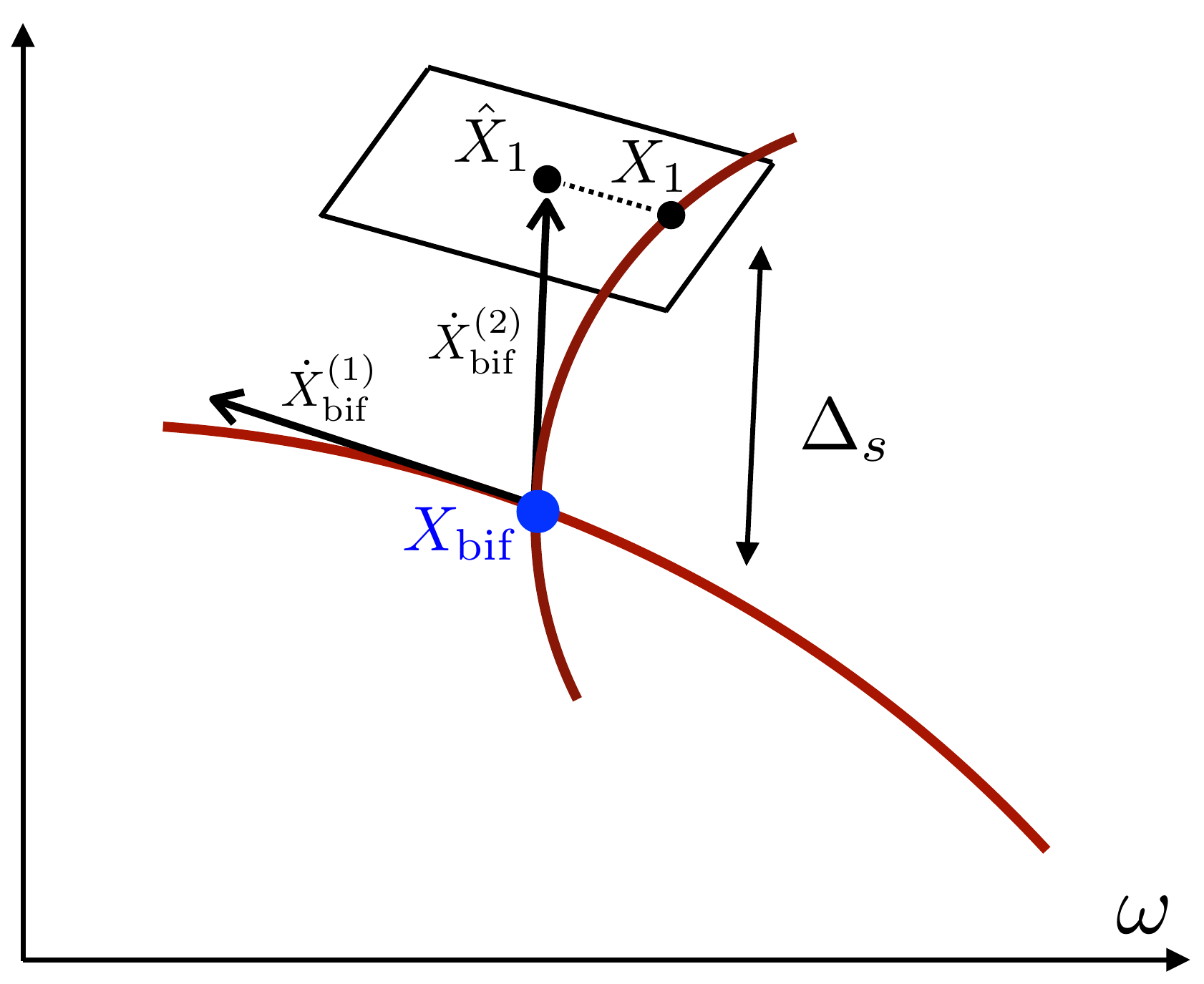}
\vspace{-.3cm}
\caption{Intersection of two solution branches at a simple branching point. Analyzing the 
kernel at the bifurcation facilitates the branch switching.}
\label{fig:branch_switching}
\end{figure}
Note that the exact tangent vectors $X_{\rm bif}^{(1)}$ and $X_{\rm bif}^{(2)}$ are not readily available and computing them accurately would require solving an algebraic bifurcation equation (e.g. see \cite{MR1159608}), which we do not perform here. Instead consider $\phi_1,\phi_2 \in \C^{N+1}$ by any two vectors computed numerically (we use the singular value decomposition of $D_XF(X_{\rm bif})$ to do that) such that $\ker(D_XF(X_{\rm bif})) = \langle \phi_1,\phi_2 \rangle$. Since we expect this secondary bifurcation to be a generic pitchfork bifurcation with respect to the parameter $\omega$, we numerically set 
\[
\hat X_{\rm bif}^{(2)} \bydef (\phi_2)_{N+1} \phi_1  -(\phi_1)_{N+1} \phi_2 \in \ker(D_XF(X_{\rm bif})) \approx X_{\rm bif}^{(2)}
\]
so that it has a zero tangent contribution in the parameter $\omega$. We then set $X_0 = X_{\rm bif}$ and $\dot X_0 = \hat  X_{\rm bif}^{(2)}$, and initiate the numerical pseudo-arclength continuation on the problem $F:\C^{N+1} \to \C^N$ as presented in Section~\ref{sec:PAL_Continuation}.

Since the figure eight is expected to occur (according to Remark \ref{Rem}) at 
\[
\omega = 2 \sqrt{s_1} = \frac{1}{2}\sum_{j=1}^{n-1}\frac{1}{\sin(\pi j/n)},
\]
 we monitor the sign of the function 
$\omega \mapsto 2 \sqrt{s_1} - \omega$ along the continuation on the second branch.
When it changes sign, we fix $\omega = 2 \sqrt{s_1}$ and run Newton's method to obtain the figure eight. 


\subsection{Stability}  \label{sec:stability}

An important question is to consider the stability of choreographic solutions,
and for this we put aside the symmetrized DDE formulation, and 
 return to the original rotating $n$-body problem.
That is, we start with the second order problem defined in Equation \eqref{Ne}.
Let $f\colon \mathbb{R}^{6n} \to \mathbb{R}^{6n}$ denote the corresponding first 
order vector field obtained by appending the velocity variables to the equation, 
and suppose that $\gamma \colon [0, T] \to \mathbb{R}^{6n}$ is a 
periodic solution of the problem.  That is, assume that $T > 0$
and that $\gamma(t)$ solves the ordinary differential equation
$\gamma'(t) = f(\gamma(t))$ for $t \in (0,T)$ with $\gamma(0) = \gamma(T)$. 

The \textit{equation of first variation} is the non-autonomous linear 
matrix 
initial value problem defined by  
\[
M'(t) = Df(\gamma(t)) M(t),  \quad \quad \quad M(0) = \mbox{Id}.
\]
The stability of the 
periodic orbit is determined by the eigenvalues of the monodromy matrix
$M(T)$.  We refer to the number of unstable eigenvalues of $M(T)$
as the \textit{Morse index} of $\gamma$.  

To compute the Monodromy matrix we 
numerically integrate the system of equations 
\begin{align*}
\gamma' &= f(\gamma) \\
M' &= Df(\gamma) M
\end{align*}
over the time interval $[0,T]$, where $T > 0$ is the period of the orbit.
We use a standard Runge-Kutta scheme built into MatLab (the 
standard \verb|rk45|).  
The initial conditions are $\gamma(0) = \gamma_0$ and $M(0) = \mbox{Id}$,
where $\gamma_0$ is a point on the periodic orbit.   The initial conditions 
$\gamma_0$ are obtained exploiting the fact that we have already computed 
the Fourier coefficients of the trajectory of the $n$-th body using the DDE
formulation.  The Fourier series of the trajectories for
the other $n-1$ bodies are recovered 
from the symmetries, and by evaluating the Fourier series we obtain
an appropriate initial condition $\gamma_0 \in \mathbb{R}^{6n}$ on the 
periodic orbit.

\subsection{Numerical results} \label{sec:Examples}

We applied successfully the numerical approach of Section~\ref{sec:from_polygon_to_eight} to the cases $N = 3,5, 7, 9, 11,13$ and $15$ bodies.  Stability is computed as discussed in Section~\ref{sec:stability}.
 Figure~\ref{fig:3_5_bodies}, illustrates the numerical continuation from the equilateral triangle to the figure eight for $N=3$ bodies, and illustrates also the analogous computation for $N=5$ bodies. The Morse indices are also reported along the branches. Similarly, see Figure~\ref{fig:7_9_bodies} for the continuations for $N=7$ and $N=9$ bodies. In the case of  $N=11$, $N=13$ and $N=15$ bodies, we do not report the Morse indices, as the stability of the orbits change too frequently along the continuation 
branch.  But the bifurcation diagrams are given in  Figure~\ref{fig:11_13_15_bodies}.

\begin{figure}[h!]
\centering
\vspace{\baselineskip}
\includegraphics[width=.45 \textwidth]{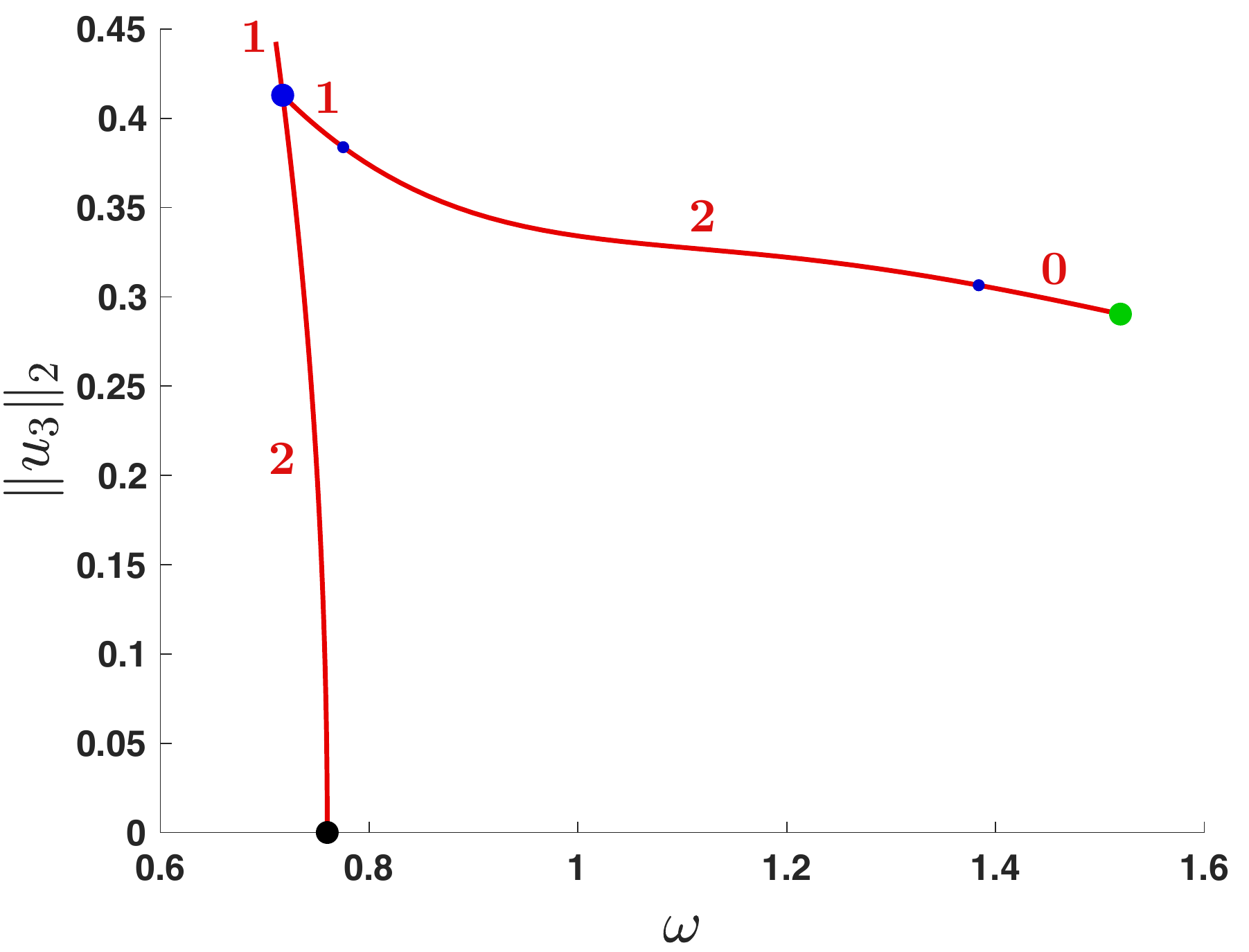}
\includegraphics[width=.45 \textwidth]{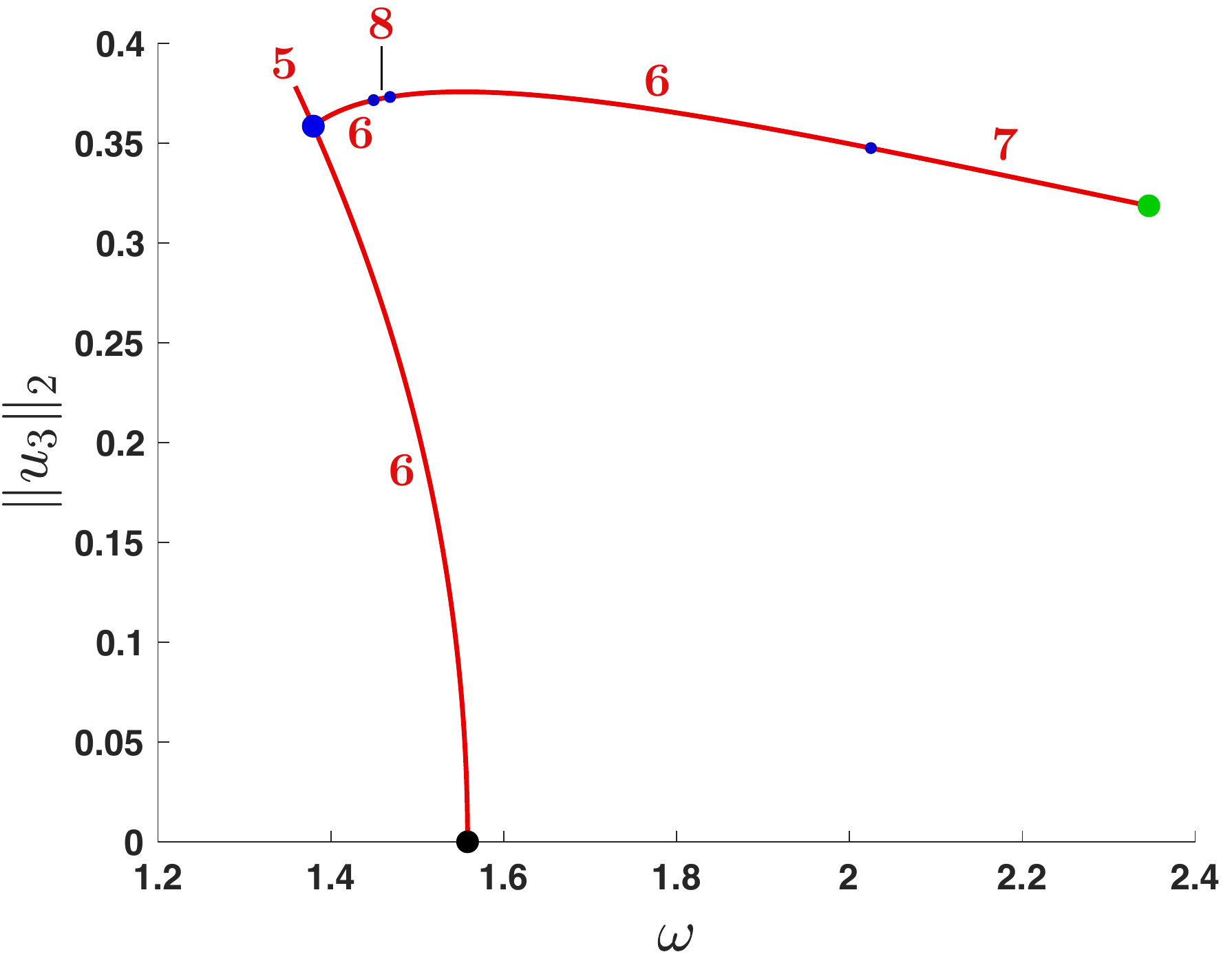}
\vspace{-.3cm}
\caption{Continuation from the triangle (the black dot) to the figure eight (the green dot) for $N=3$ (left) and $N=5$ (right) bodies. The different Morse indices are portrayed along the branches.}
\label{fig:3_5_bodies}
\end{figure}

\begin{figure}[h!]
\centering
\vspace{\baselineskip}
\includegraphics[width=.48 \textwidth]{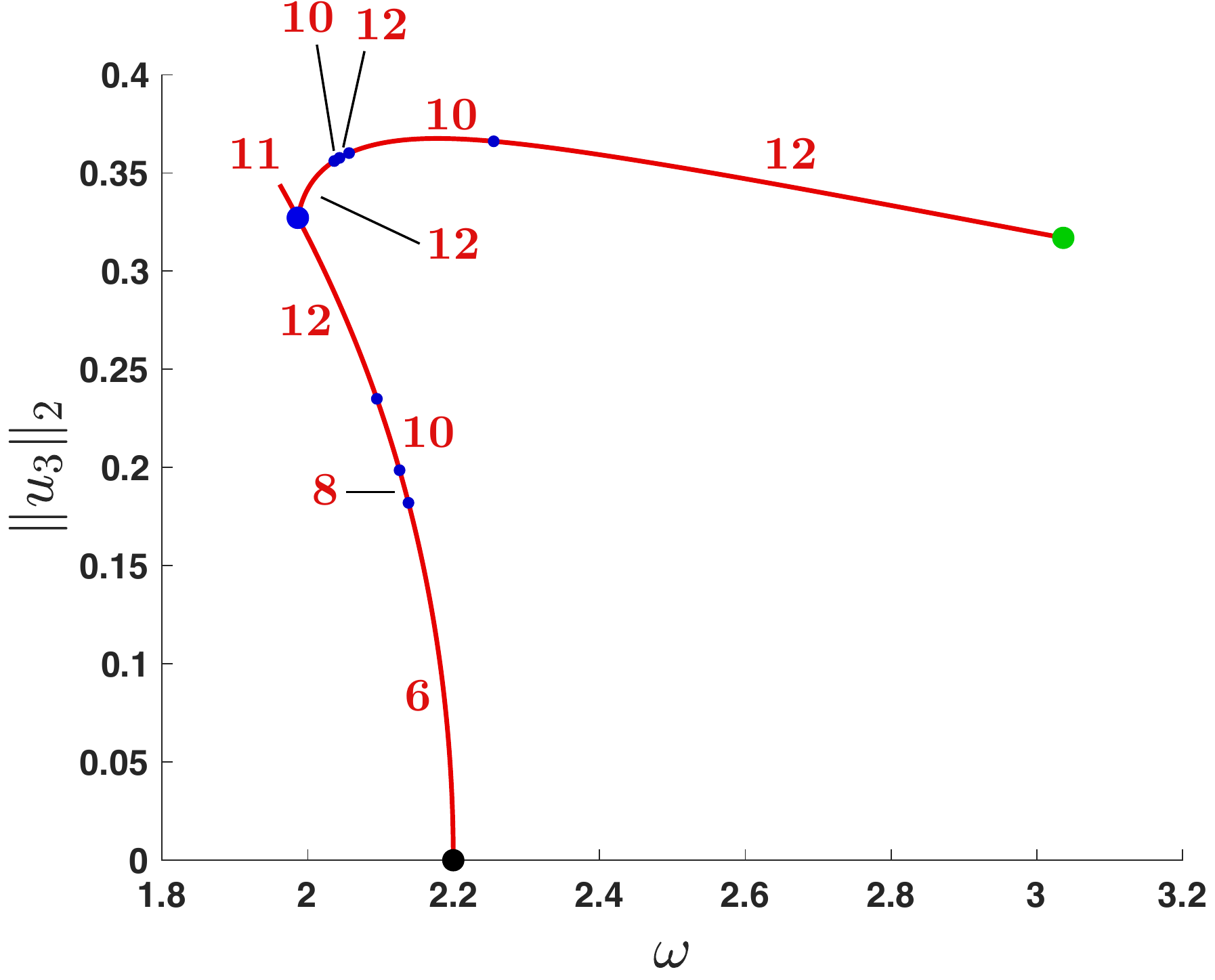}
\includegraphics[width=.48 \textwidth]{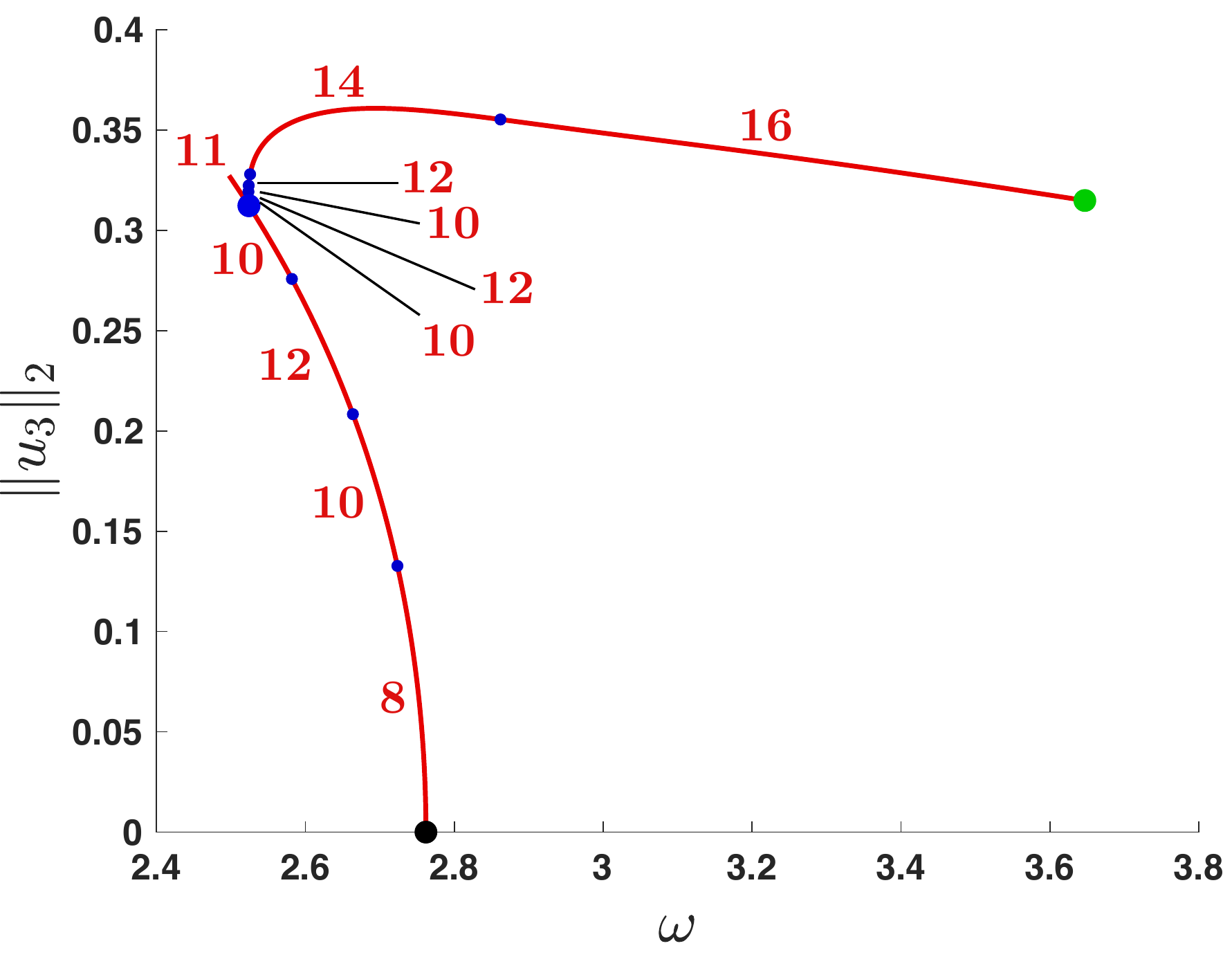}
\vspace{-.3cm}
\caption{Continuation from the triangle (the black dot) to the figure eight (the green dot) for $N=7$ (left) and $N=9$ (right) bodies. The different Morse indices are portrayed along the branches.}
\label{fig:7_9_bodies}
\end{figure}

\begin{figure}[h!]
\centering
\vspace{\baselineskip}
\includegraphics[width=.32 \textwidth]{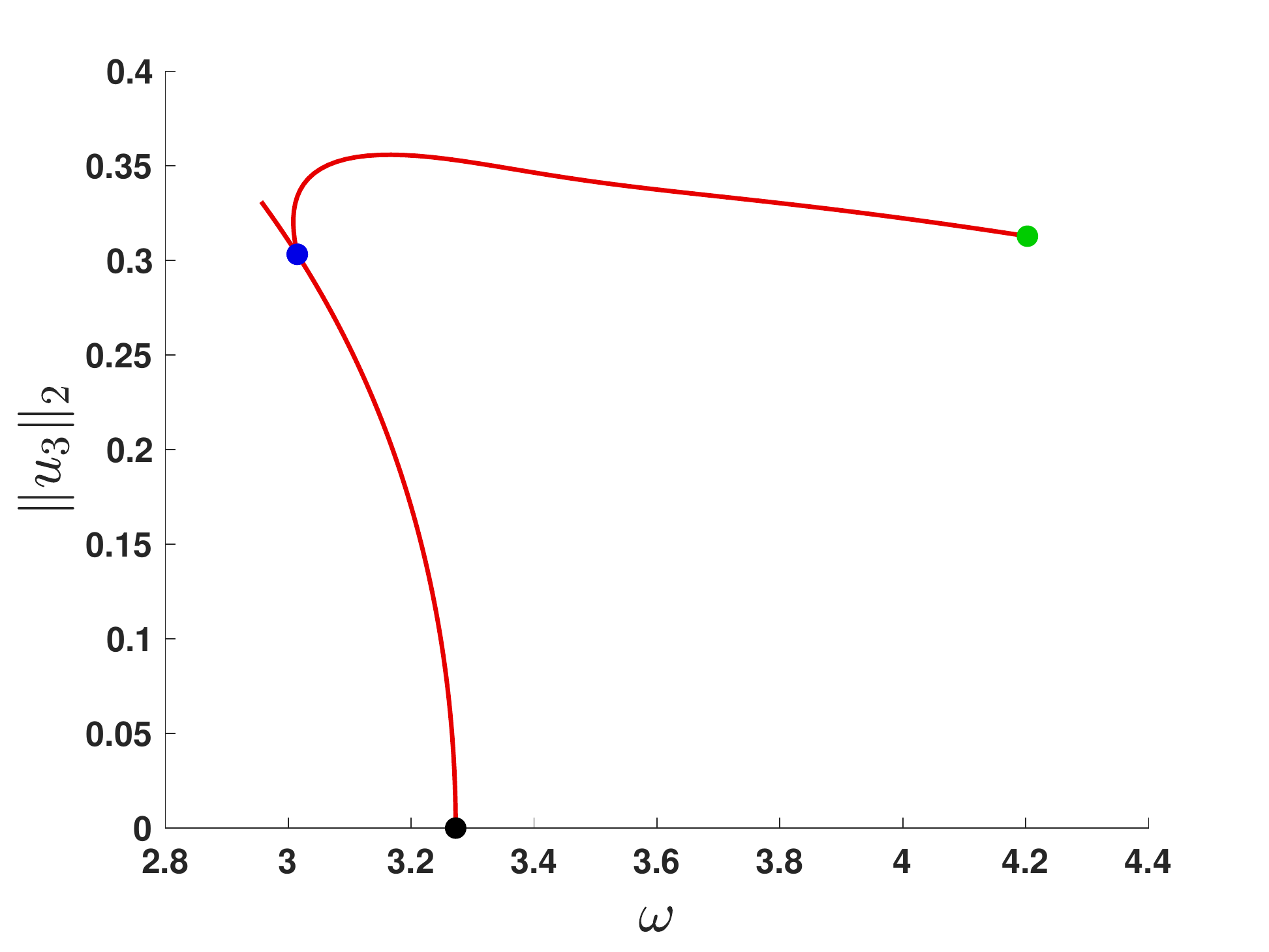}
\includegraphics[width=.32 \textwidth]{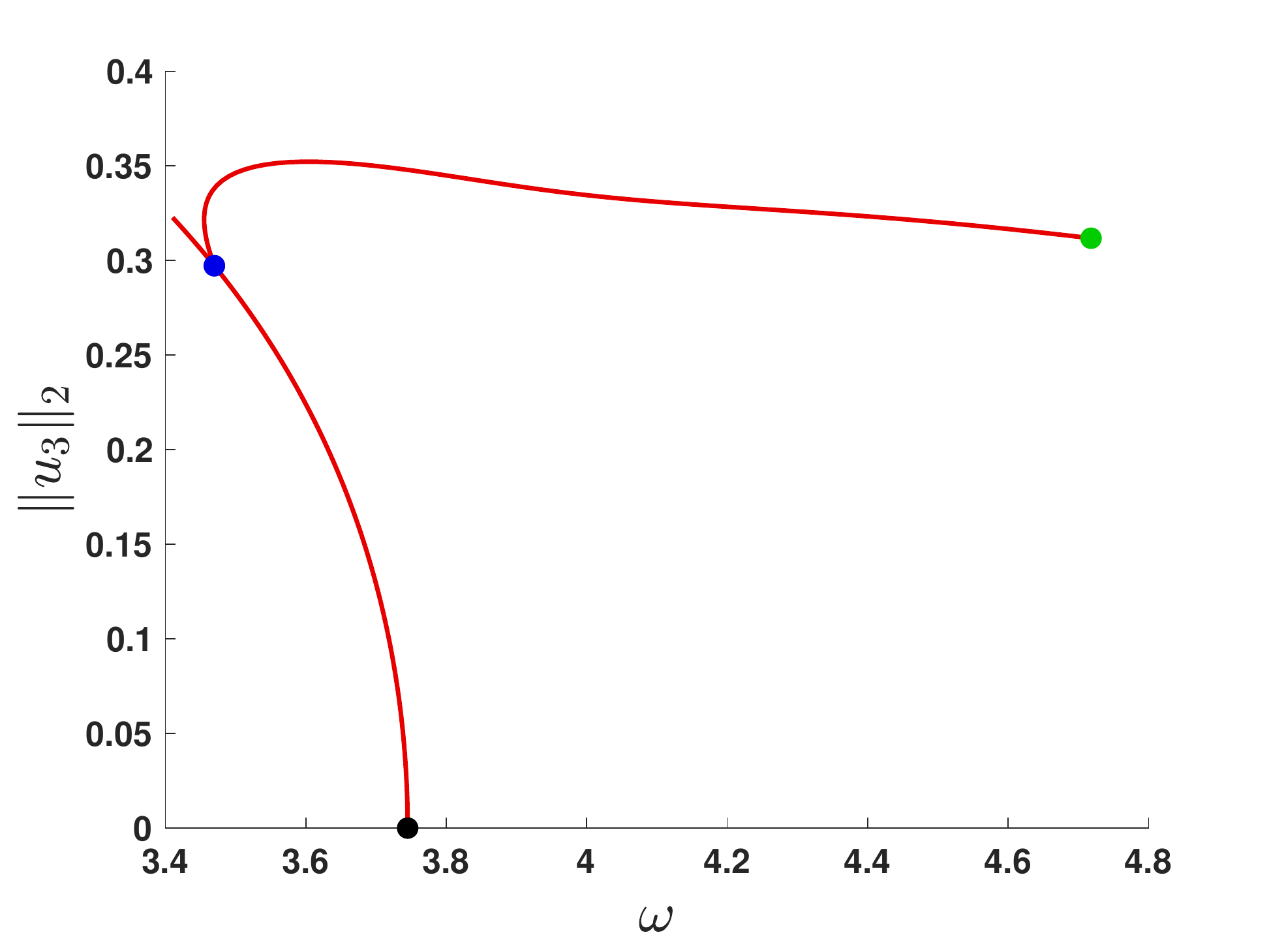}
\includegraphics[width=.32 \textwidth]{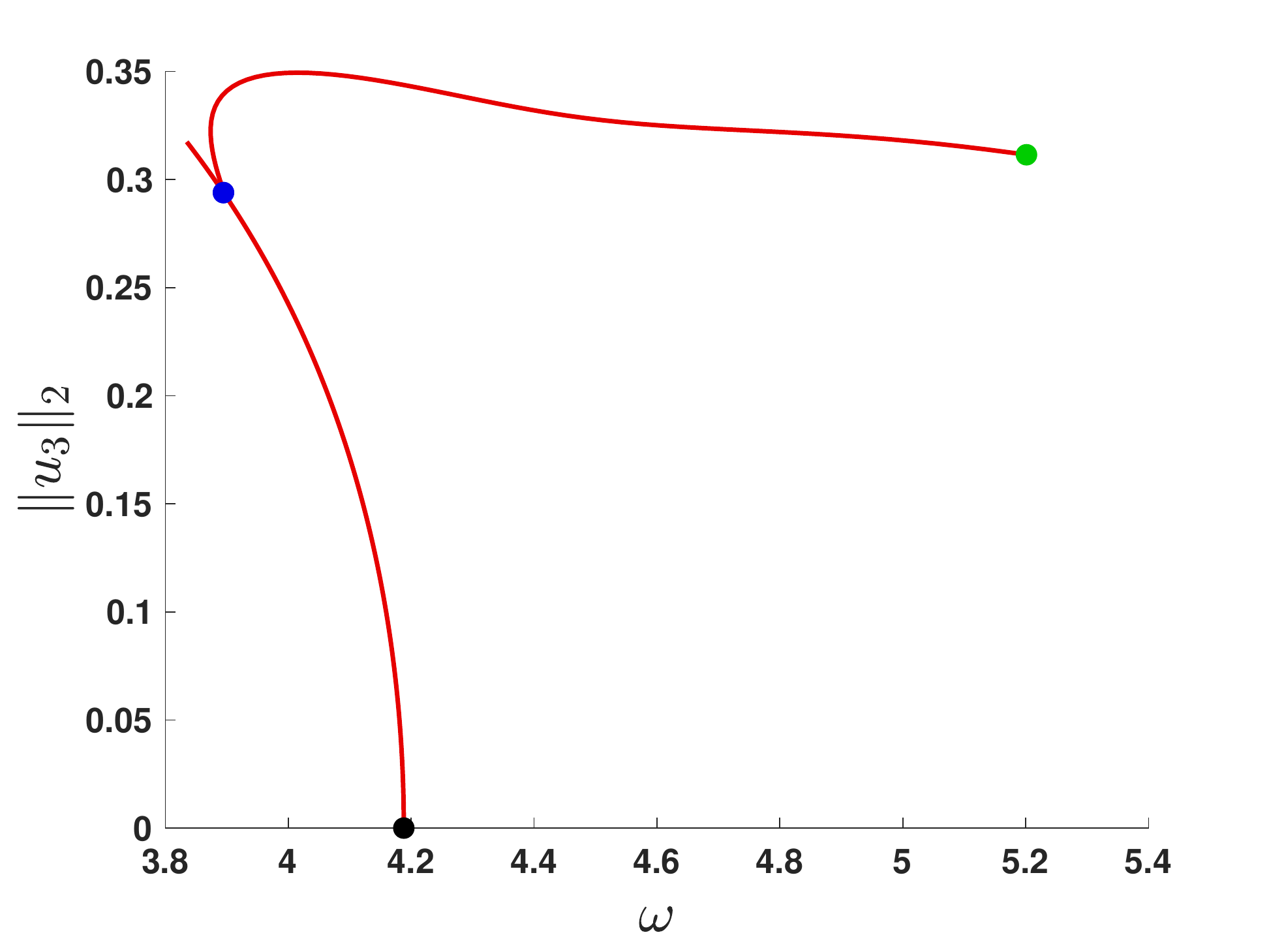}
\vspace{-.3cm}
\caption{Continuation from the triangle (the black dot) to the figure eight (the green dot) for $N=11$ (left), $N=13$ (center) and $N=15$ (right) bodies.}
\label{fig:11_13_15_bodies}
\end{figure}

We remark that bifurcation diagrams in the cases $N = 3, 5, 7, 9$ exhibit some 
qualitative differences.  Most notably the angle between the Lyapunov family and the 
axial family changes dramatically in these cases.   The diagrams in the cases
of $N = 9,11,13,15$ on the other hand are very similar, and exhibit a kind of 
convergence to a universal profile.  Again, the existence of such a profile is pure
conjecture at this point.  But the numerics seem to bear it out.

It was observed early on that the three body figure eight choreography is linearly stable (Morse index zero).
Indeed, the KAM stability of the three body eight was established
by Kapela and Sim\'{o} \cite{KaSi07,KaSi17} using computer-assisted methods.  
For five or more bodies no stable eights have ever been reported, and indeed we observe 
quite large Morse indices along the axial continuation branch in all but the three body case.


\begin{figure}[t!]
\centering
\vspace{\baselineskip}
\includegraphics[width=0.9 \textwidth]{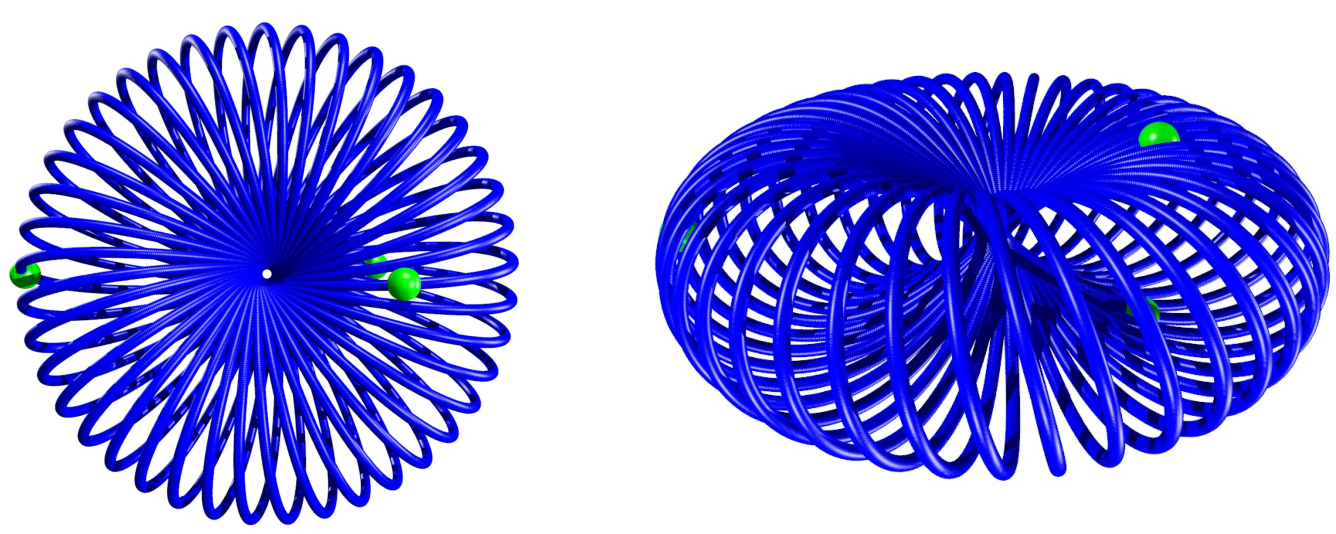}
\vspace{-.3cm}
\caption{\textbf{A stable spatial three body choreography:} a spatial choreography 
in the three body axial family near the three body eight.  The orbit is linearly stable 
in the sense of Hamiltonian systems: that is, all of it's Floquet multipliers are on 
the unit circle. This choreography is a $(p,q)$-torus knot with $p=19$ and $q= 41$.
See \cite{renatoAnimations} for an interactive animation.
}
\label{fig:stableSpatial}
\end{figure}

\begin{figure}[t!]
\centering
\vspace{\baselineskip}
\includegraphics[width=0.9 \textwidth]{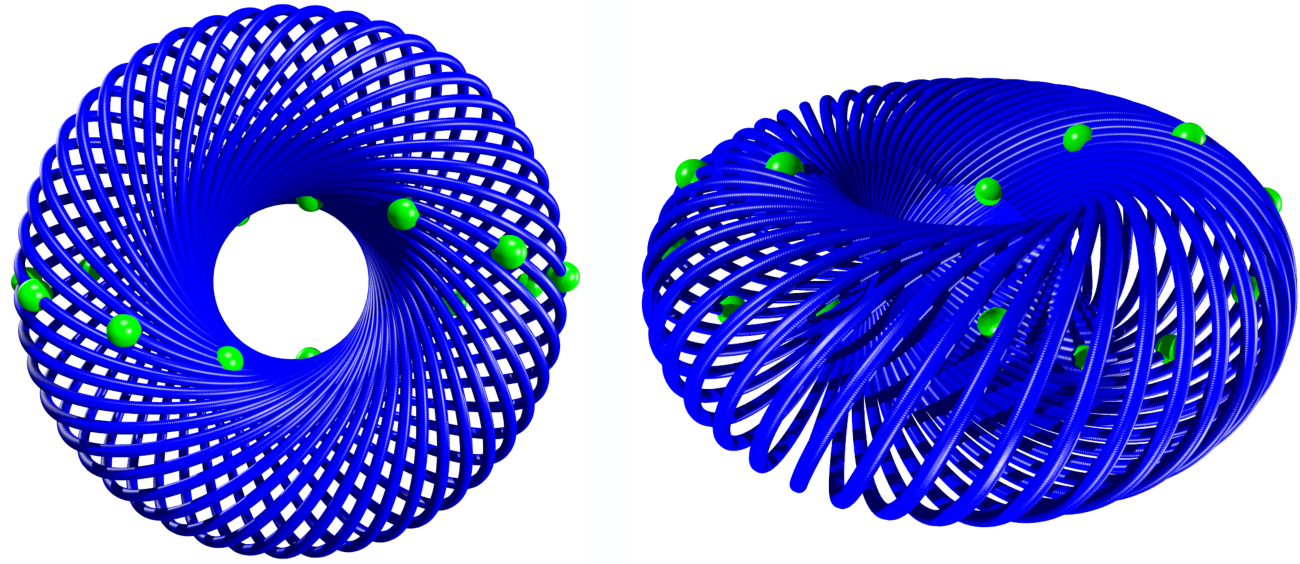}
\vspace{-.3cm}
\caption{\textbf{A spatial fifteen body choreography:} a spatial choreography 
in the fifteen body axial family along the branch that contains the fifteen body eight. The orbit is
a $(p,q)$-torus knot choreography with $p = 31$ and $q = 47$.}
\label{fig:stableSpatial}
\end{figure}

\begin{remark}[A stable spatial three body choreography] \label{rem:stableSpatialThreeBody}
A periodic orbit is linearly stable \textit{in the sense of Hamiltonian systems} if all of 
its Floquet multipliers are on the unit circle in $\mathbb{C}$.
It was observed early on, based on numerical evidence, 
that the three body figure eight seemed to be 
linearly stable in the sense of Hamiltonian systems.  This observation was 
eventually proven by Kapela and Sim\'{o} \cite{KaSi07} using computer
assisted methods of proof.  Indeed, the same authors prove KAM stability in 
\cite{KaSi17}, again using computer-assisted methods.  

It is an open question as to wether or not there exist other stable eights for higher numbers 
of bodies, and our numerical experiments seem to suggest that the answer to this 
question is ``no''.  On the other hand, linear stability is a robust property, so that 
nearby periodic orbits in the continuation class of the three body eight are also stable.
By numerically exploring the continuation class we have been able to find many spatial 
choreographies which are linearly stable.  One such orbit is illustrated in 
Figure \ref{fig:stableSpatial}.
\end{remark}

\section{Conclusion} \label{sec:conclusions}
Building on previous numerical studies of $n$-body eights
\cite{Mo93,Si00,MR1919833} and in particular 
numerical continuation results for choreographies found in 
\cite{MR2429679,CaDoGa18},
we applied classical numerical continuation methods to the periodic 
solutions of a delay differential equation (DDE) describing choreographic 
motion in the gravitational $n$-body problem.  
The DDE formulation is given \cite{ourTorusKnots}, and 
has two distinct advantages over working directly 
with the standard $n$-body equations of motion derived from Newton's Laws.  
In the first place, periodic solutions of the DDE 
satisfying a certain number theoretic condition correspond
to choreographies
rather than to arbitrary $n$-body periodic motions.  
Second, the DDE reduces to a system of six scalar equations (with delays), 
regardless of the number of bodies under consideration: 
adding more bodies introduces new terms to the 
nonlinearity rather than increasing the dimension of the system.  

The $n$-gon appears as a constant/equilibrium solution of the DDE, and a linear 
stability analysis shows that there is a single vertical Lyapunov 
family of periodic orbits in the center manifold.  
Using the explicit first order formulas for the vertical family derived in 
\cite{GaIz11,MR3554377} we are able to start the numerical continuation 
of the vertical family in an automatic way for any desired number of bodies.
For every odd $n$ between $3$ and $15$ we continue the vertical family 
with respect to energy/frequency, checking for the eight 
as we move along the branch. Note that we recover the earlier three and seven body results 
from \cite{MR2429679,CaDoGa18}, and also obtain new examples connecting 
the $n$-gon to the eight.

In each case we find that the $n$-body eight appears shortly after a symmetry 
breaking bifurcation from the $n$-gon's branch.   Moreover, after leaving
the $n$-gon using the formulas developed in \cite{GaIz11,MR3554377}, there 
appears to be one and only one bifurcation between the $n$-gon and the eight.
We stress that this is another 
advantage of performing the continuation in the DDE rather than using the 
$n$-body equations of motion:
continuation of choreographies in the full $n$-body problem can
result in additional bifurcations, as non-choreography periodic 
solutions may bifurcate from choreographies.  The fact that, in the 
symmetrized setting, a single dynamical 
mechanism appears to organize the transition from $n$-gon to eight 
leads us to generalize Marchal's conjecture (described in \cite{fijozHabilatation}
and again in the introduction of the present work)
to any odd number of bodies.  
  
We remark that -- even in the case of three bodies -- there appears
as of yet to be no 
mathematically rigorous proof of Marchal's conjecture, much less any of 
it's generalizations to more bodies.  
A very interesting avenue of future research would be to develop such proofs.
Mathematically rigorous results about $n$-body choreographies come in two 
main varieties: variational methods and computer-assisted proofs.
We refer the interested reader
to the works of \cite{ChMo00,BaTe04,TeVe07,FeTe04,BT04} for a
much more complete discussion of the literature on variational methods
for choreography problems.  Computer-assisted methods of 
proof for $n$-body choreographies which apply constructive
geometric arguments  in the full $n$-body state space are found in 
the works of \cite{KaZg03,MR2185163,KaSi07,KaSi17}.  A very interesting work 
which uses both variational and computer-assisted approaches is \cite{MR2259202}.  
The author's aforementioned 
work in \cite{ourTorusKnots} uses a-posteriori analysis for Fourier spectral methods 
to prove the existence of spatial torus knot choreographies using the delay differential 
equation set up exploited in the present work.

As is well known and already mentioned in the introduction,
 there are technical difficulties applying variational 
methods to continuation branches which encounter bifurcations.  On the other hand, 
mathematically rigorous computer-assisted methods for studying continuous branches 
of periodic orbits are by now quite advanced.  This is true even in the case of 
DDEs, and we refer for example to the work of  
\cite{MR2487806,MR2711226,jonopJP_Kon,MR2630003,Le18,JB_Elena_Cont_PO}.
Moreover,  methods of computer-assisted proof have been developed for proving the 
existence of, and continuing through, a number of infinite dimensional bifurcations.
See for example the work of
\cite{MR3779642,jono_jones,MR2679365,MR3792794,MR3808252}.

While existing methods from the works just cited do not cover the symmetry breaking 
bifurcations needed to prove instances of the generalized Marchal's conjecture,
we believe an appropriate framework for the envisioned proofs can be obtained by 
extending/adapting these works.  
Moreover, the out of plane bifurcation from the Lagrangian 
$n$-gon can be handled using the results of \cite{GaIz11,GaIz13}, as in the present work.
Computer-assisted proofs of several cases of the generalized Marchal's conjecture,
the ones for which we have presented numerical evidence in the present manuscript, 
are the subject of a work in preparation by the authors.

\end{document}